\newcommand{\bC}{ {\mathbb{C}} }
\newcommand{\bP}{\mathbb{P}}
\newcommand{\bQ}{\mathbb{Q}}
\newcommand{\bR}{\mathbb{R}}
\newcommand{\bZ}{\mathbb{Z}}
\newcommand{\cA}{\mathcal{A}}
\newcommand{\cD}{\mathcal{D}}
\newcommand{\cE}{\mathcal{E}}
\newcommand{\cH}{\mathcal{H}}
\newcommand{\cL}{\mathcal{L}}
\newcommand{\cO}{\mathcal{O}}
\newcommand{\cP}{\mathcal{P}}
\newcommand{\cW}{\mathcal{W}}
\newcommand{\Aut}{\mathrm{Aut}}
\newcommand{\Hom}{\mathrm{Hom}}
\newcommand{\pt}{\mathrm{pt}}
\newcommand{\Eff}{{\mathrm{Eff}}}
\newcommand{\Tot}{\mathrm{Tot}}
\newcommand{\sgn}{\mathrm{sgn}}
\newcommand{\Comb}{\mathrm{Comb}}
\newcommand{\bw}{\mathbf{w}}
\newcommand{\ff}{\mathfrak{f}}
\newcommand{\fl}{\mathfrak{l}}
\newcommand{\fp}{\mathfrak{p}}
\newcommand{\su}{\mathsf{u}}
\newcommand{\hD}{\hat{D}}
\newcommand{\hX}{\hat{X}}
\newcommand{\hY}{\hat{Y}}
\newcommand{\hSi}{\hat{\Sigma}}
\newcommand{\tSi}{\widetilde{\Sigma}}
\newcommand{\tC}{\widetilde{C}}
\newcommand{\tD}{\widetilde{D}}
\newcommand{\tF}{\widetilde{F}}
\newcommand{\tN}{\widetilde{N}}
\newcommand{\tT}{\widetilde{T}}
\newcommand{\tX}{\widetilde{X}}
\newcommand{\tb}{\widetilde{b}}
\newcommand{\tv}{\widetilde{v}}
\newcommand{\tgamma}{\widetilde{\gamma}}
\newcommand{\tbeta}{\widetilde{\beta}}
\newcommand{\va}{\vec{a}}
\newcommand{\vd}{\vec{d}}
\newcommand{\vf}{\vec{f}}
\newcommand{\vn}{\vec{n}}
\newcommand{\vmu}{\vec{\mu}}
\newcommand{\vnu}{\vec{\nu}}
\newcommand{\vdelta}{\vec{\delta}}
\newcommand{\vlambda}{\vec{\lambda}}
\newcommand{\vrho}{\vec{\rho}}
\newcommand{\vxi}{\vec{\xi}}
\newcommand{\inner}[1]{\langle  #1 \rangle}
\newtheorem{dummy}{dummy}[section]
\newtheorem{lemma}[dummy]{Lemma}
\newtheorem{theorem}[dummy]{Theorem}
\newtheorem{conjecture}[dummy]{Conjecture}
\newtheorem{corollary}[dummy]{Corollary}
\newtheorem{remark}[dummy]{Remark}
\newtheorem{definition}[dummy]{Definition}
\newtheorem{example}[dummy]{Example}
\newtheorem{assumption}[dummy]{Assumption}
\begin{document}
\title{Open/Closed BPS Correspondence and Integrality}

\author{Song Yu}
\address{Department of Mathematics, California Institute of Technology, Pasadena, CA, USA \vspace{-7pt}}
\address{Yau Mathematical Sciences Center, Tsinghua University, Beijing, China}
\email{songyu@caltech.edu}

\begin{abstract}
We prove the integrality and finiteness of open BPS invariants of toric Calabi-Yau 3-folds relative to Aganagic-Vafa outer branes, defined from open Gromov-Witten invariants by the Labastida-Mari\~no-Ooguri-Vafa formula. Specializing to disk invariants, we extend the open/closed correspondence of Gromov-Witten invariants to BPS invariants and prove the integrality of a class of genus-zero BPS invariants of toric Calabi-Yau 4-folds, thereby providing additional examples for the conjecture of Klemm-Pandharipande.
\end{abstract}



\maketitle

\setcounter{tocdepth}{1}
\tableofcontents

\section{Introduction}\label{sect:Intro}

\subsection{Open BPS invariants of Calabi-Yau 3-folds}\label{sect:IntroOpen}
Let $X$ be a smooth Calabi-Yau 3-fold. The famous conjecture of Gopakumar and Vafa \cite{GV98a,GV98b} concerns the relation between two enumerative theories of $X$:
\begin{itemize}
    \item \emph{Gromov-Witten invariants} $N^X_{g,\beta}$, which are virtual counts of curves in $X$ and are rational numbers in general;
    \item \emph{BPS invariants} $n^X_{g,\beta}$, which are counts of BPS states supported on curves in $X$ and are integers according to physical interpretations and predictions. 
\end{itemize}
Here, the invariants are parameterized by the genus $g \in \bZ_{\ge 0}$ and class $\beta \in H_2(X; \bZ)$ of the curve. Gopakumar and Vafa conjectured that the two sets of invariants are related by the resummation formula
\begin{equation}\label{eqn:IntroGVResum}
    \sum_{g \in \bZ_{\ge 0}} N^{X}_{g, \beta} g_s^{2g-2}\\
    = \sum_{k \mid \beta} \sum_{g \in \bZ_{\ge 0}} \frac{n^{X}_{g,\frac{\beta}{k}}}{k} \left(2\sin \frac{kg_s}{2} \right)^{2g-2}
\end{equation}
where $g_s$ is a formal variable, $\beta \in H_2(X; \bZ)$ is a non-zero effective curve class, and for $k \in \bZ_{\ge 1}$ we say that $k \mid \beta$ if $\frac{\beta}{k}\in H_2(X; \bZ)$.

While the mathematical foundations of Gromov-Witten theory are relatively well-established, a rigorous mathematical definition of the BPS invariants is yet to be given and there have been rich developments in the literature; see e.g. \cite{HST01,Katz08,PT10,MT18}. If one takes \eqref{eqn:IntroGVResum} to be the definition, the Gopakumar-Vafa conjecture can be phrased as \emph{integrality} and \emph{finiteness} properties of the BPS invariants thereby defined \cite{GV98a,GV98b,BP01}. Namely, for any fixed $\beta \in H_2(X; \bZ)$, $n^X_{g,\beta}$ is an integer for all $g \in \bZ_{\ge 0}$ and is zero for $g \gg 0$. When the Calabi-Yau 3-fold $X$ is compact, the integrality and finiteness parts were proved by Ionel-Parker \cite{IP18} and Doan-Ionel-Walpuski \cite{DIW21} respectively using symplectic methods. When $X$ is toric Calabi-Yau (in which case it is non-compact), a proof was given in the earlier works of Peng \cite{Peng07} and Konishi \cite{Konishi06a,Konishi06b} based on the computation of all-genus Gromov-Witten invariants by the topological vertex \cite{AKMV03,LLLZ09} and the Gromov-Witten/Donaldson-Thomas correspondence \cite{MNOP06,MOOP11}. The conjecture has also been studied in the Fano case by Zinger \cite{Zinger11} and Doan-Walpuski \cite{DW19}.

Analogously, for open topological strings, the open Gromov-Witten invariants of $X$ with prescribed Lagrangian boundary conditions are also expected to carry integrality properties encoded by open BPS counts. We consider the case where $X$ is toric and the boundary condition is given by a disjoint union $L = L_1 \sqcup \cdots \sqcup L_s$ of $s$ special Lagrangian submanifolds called \emph{Aganagic-Vafa outer branes} \cite{AV00,AKV02,FL13} framed by integers $\vf = (f_1, \dots, f_s)$. Each $L_i$ is diffeomorphic to $S^1 \times \bC$ and invariant under a $U(1)^2$-action on $X$. Given genus $g \in \bZ_{\ge 0}$, effective class $\beta \in H_2(X,L;\bZ)$, and a sequence of partitions $\vmu = (\mu^1, \dots, \mu^s)$, the \emph{open Gromov-Witten invariant}
$$
    N^{X, L, \vf}_{g, \beta, \vmu}
$$
is a virtual count of genus-$g$, degree-$\beta$ bordered Riemann surfaces in $X$ whose winding profile at $L_i$ is determined by the partition $\mu^i$. These invariants are also defined and computed by the topological vertex. A particularly well-studied example of such open geometries is the resolved conifold $X = \Tot(\cO_{\bP^1}(-1) \oplus \cO_{\bP^1}(-1))$ relative to a single brane, whose open Gromov-Witten invariants conjecturally correspond to knot and link invariants originating from Chern-Simons gauge theory on $S^3$; see e.g. \cite{Witten89,Witten95,GV99,OV00,LM00,LMV00,RS01,MV02}. Labastida, Mari\~no, Ooguri, and Vafa (LMOV) \cite{OV00,LM00,LMV00,MV02} discovered a resummation formula, similar to \eqref{eqn:IntroGVResum}, that exhibit the integrality of these invariants. For a general open geometry $(X, L, \vf)$ and winding profile $\vmu$, the formula can be written as
\begin{equation}\label{eqn:IntroLMOVResum}
    \begin{aligned}
        \sum_{g \in \bZ_{\ge 0}} & N^{X, L, \vf}_{g, \beta, \vmu} g_s^{2g-2+\ell(\vmu)} \\
        & = \frac{1}{\prod_{i = 1}^s z_{\mu^i}} \sum_{k \mid \beta, \vmu}  \sum_{g \in \bZ_{\ge 0}} (-1)^{\ell(\vmu)+g}k^{\ell(\vmu)-1}n^{X, L, \vf}_{g,\frac{\beta}{k}, \frac{\vmu}{k}}\left(2\sin \frac{kg_s}{2} \right)^{2g-2} \prod_{i=1}^s \prod_{j=1}^{\ell(\mu^i)} 2\sin \frac{\mu^i_jg_s}{2}.
    \end{aligned}
\end{equation}
Here, for each partition $\mu^i$, $\ell(\mu^i)$ is its length, the $\mu^i_j$'s are its parts, and
$$
    z_{\mu^i} := |\Aut(\mu^i)| \prod_{j=1}^{\ell(\mu^i)} \mu^i_j
$$
where $\Aut(\mu^i)$ is the automorphism group of $\mu^i$. We set $\ell(\vmu) := \sum_{i=1}^s \ell(\mu^i)$. Moreover, for $k \in \bZ_{\ge 1}$ we say that $k \mid \vmu$ if $k \mid \mu^i_j$ for all $i, j$, in which case $\frac{\vmu}{k}$ denotes the sequence of partitions whose parts are $\frac{\mu^i_j}{k}$.

We refer to the invariants
$$
    n^{X, L, \vf}_{g, \beta, \vmu}
$$
arising from \eqref{eqn:IntroLMOVResum} as the \emph{open BPS invariants} of $(X, L, \vf)$. Similar to the Gopakumar-Vafa conjecture in the closed sector, the open BPS invariants are also predicted to satisfy integrality and finiteness properties. As the first main result of the present paper, we verify this prediction.

\begin{theorem}[See Theorem \ref{thm:OpenBPS}]\label{thm:IntroOpenBPS}
For any effective class $\beta \in H_2(X,L;\bZ)$ and winding profile $\vmu$, we have that
$$
    n^{X, L, \vf}_{g, \beta, \vmu} \in \bZ
$$
for all $g \in \bZ_{\ge 0}$ and is zero for $g \gg 0$.
\end{theorem}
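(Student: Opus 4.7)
\vspace{0.5em}

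\noindent\textbf{Proof proposal.} The plan is to adapt the strategy used by Peng \cite{Peng07} and Konishi \cite{Konishi06a, Konishi06b} for the closed Gopakumar--Vafa conjecture to the open setting, by combining the topological vertex formula with plethystic/Adams-operation techniques.

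First, I would invert the LMOV formula \eqref{eqn:IntroLMOVResum} so as to express the open BPS invariants explicitly in terms of the open GW invariants. Package the open GW invariants into a generating series
$$
F^{X,L,\vf}(g_s, Q, \vp) \ := \  \sum_{\beta, \vmu} \sum_{g \in \bZ_{\ge 0}} N^{X,L,\vf}_{g,\beta,\vmu}\, g_s^{2g-2+\ell(\vmu)}\, Q^\beta\, \frac{p_{\vmu}}{\prod_{i} z_{\mu^i}}
$$
where the $p_\vmu = \prod_i p^i_{\mu^i}$ are power-sum symmetric functions in formal variables attached to each brane $L_i$, and $Q^\beta$ is a Novikov variable. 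A parallel series $f^{X,L,\vf}$ is formed from the open BPS invariants. The LMOV formula then acquires the plethystic form $F = \sum_{k \ge 1} \tfrac{1}{k}\psi_k(f)$, where $\psi_k$ is the $k$-th Adams operation acting by $p_n \mapsto p_{kn}$, $Q \mapsto Q^k$, $g_s \mapsto k g_s$; hence $f = \sum_{k \ge 1} \tfrac{\mu(k)}{k}\psi_k(F)$ by Möbius inversion. This reduces the theorem to showing that this alternating sum lies in $\bZ[(2\sin(g_s/2))^{\pm 2}] \llbracket Q \rrbracket \otimes \Lambda^{\otimes s}$ with only finitely many genera contributing for each $(\beta,\vmu)$.

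Second, I would feed in the topological vertex formula of \cite{AKMV03, LLLZ09}, which expresses $F^{X,L,\vf}$ as a finite sum over assignments of partitions to the edges of the toric diagram, each summand being a product of Schur functions evaluated at principal specializations of $q = e^{\sqrt{-1}g_s}$ weighted by framing factors. The key structural fact, exploited by Peng and Konishi in the closed case, is that these building blocks live naturally in a $\lambda$-ring, so Adams operations act by the clean rule $\psi_k(q) = q^k$, $\psi_k(p^i_n) = p^i_{kn}$. Applying the Möbius inversion termwise then yields, after cancellation, an expression for $f^{X,L,\vf}$ whose integrality and finiteness can be read off from the hook-content and quantum-dimension formulae and from the explicit denominator structure of the edge and framing factors.

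Third, finiteness in $g$ for fixed $(\beta, \vmu)$ reduces to a degree bound on the $g_s$-expansion: for fixed $\beta$ and $\vmu$ only finitely many edge-partition assignments contribute to $F^{X,L,\vf}$ (the sizes of the edge partitions being constrained by the degree), and each contribution is a Laurent polynomial in $q^{1/2}$ of bounded degree; after dividing by $(2\sin(kg_s/2))^{2g-2}\prod_{i,j}2\sin(k\mu^i_j g_s/2)$, only finitely many $g$'s appear on the BPS side.

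\emph{Main obstacle.} The heart of the argument is the integrality step: in the closed setting one deals with a series in the single variable $g_s$ (and Novikov variables), whereas here the winding data $\vmu$ introduces symmetric-function variables for each brane which must interact compatibly with the Adams operations. The technical task is therefore to establish that the topological vertex generating series $F^{X,L,\vf}$ factors through the plethystic exponential of an element of $\bZ[(q^{1/2}-q^{-1/2})^{\pm 2}]\llbracket Q \rrbracket \otimes \Lambda^{\otimes s}$, which is precisely the statement dual to the desired integrality of $f^{X,L,\vf}$. The combinatorial identities needed to verify this directly from the vertex formula — essentially, a refined version of Konishi's reciprocity for Schur specializations, now carrying free partition variables on each brane — are the main content one would need to supply.
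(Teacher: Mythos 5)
Your overall strategy is indeed the one the paper follows: invert the LMOV formula by M\"obius inversion after the change of variables $q=e^{\sqrt{-1}g_s}$, substitute the topological vertex formula, and adapt the Peng--Konishi analysis. However, two of your steps have genuine gaps. The more serious one is in your finiteness argument: you assert that each edge-partition contribution to $F^{X,L,\vf}$ is ``a Laurent polynomial in $q^{1/2}$ of bounded degree,'' so that finiteness in $g$ reduces to a degree bound. This is false. The vertex amplitudes $\cW_{\lambda^1,\lambda^2,\lambda^3}(q)$ are rational functions of $q^{1/2}$ with poles at roots of unity (already $\cW_{(1),\emptyset,\emptyset}=s_{(1)}(q^\rho)=1/(q^{1/2}-q^{-1/2})$), so for fixed $(\beta,\vmu)$ the generating function is a rational function whose poles at non-trivial roots of unity would, if present, produce infinitely many non-zero $n^{X,L,\vf}_{g,\beta,\vmu}$. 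The whole difficulty of finiteness is showing that these poles cancel in the M\"obius-inverted combination, leaving only a pole at $q=1$ (i.e.\ at $t=(q^{1/2}-q^{-1/2})^2=0$) of order at most one. The paper does this by constructing an auxiliary trivalent FTCY graph $\Gamma'$ (adding two legs at each univalent vertex so that the winding data is absorbed into a closed-type vertex via the Frobenius formula $\cW_{(\lambda,\emptyset,\emptyset)}=\sum_\mu \chi_\lambda(\mu)/(z_\mu[\mu])$) and then invoking Konishi's labeled-graph decomposition and pole-cancellation result \cite[Propositions~6.11, 6.13, 6.14]{Konishi06b}. Nothing in your sketch addresses this cancellation.

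The integrality step is also thinner than you suggest. What one ``reads off'' from the vertex formula is not integrality of the BPS coefficients but membership of the relevant generating function in $\cL[t]$, the ring of ratios $a(t)/b(t)$ with $a\in\bZ[t]$ and $b$ monic in $\bZ_0[t]$; this requires (i) divisibility bookkeeping for the multinomial coefficients arising from expanding $\ln Z$ and for the automorphism factors $z_{\mu^i}$ (the paper uses $k^{\ell(\vmu)}(k'!)^s$ dividing the ratio of $z$'s, plus \cite[Lemma~A.2]{Konishi06b}), and (ii) the conjugation symmetry $Y_{\vlambda^t}(q)=Y_{\vlambda}(q^{-1})$ together with $\cW_{\lambda^v}(q^k)\in q^{k|\lambda^v|/2}\bZ[q,q^{-1}]/\bZ_0[t]$, so that pairing $\vlambda$ with $\vlambda^t$ produces an expression symmetric under $q\leftrightarrow q^{-1}$ and hence lying in $\cL[t]$ by \cite[Lemma~6.2]{Konishi06a}. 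Integrality of the $n^{X,L,\vf}_{g,\beta,\vmu}$ only follows by \emph{combining} this with the finiteness statement: once $G^{X,L,\vf}_{\beta,\vmu}(t)\in\cL[t]$ and $tG^{X,L,\vf}_{\beta,\vmu}(t)\in\bQ[t]$ are both known, the monic denominator forces $tG^{X,L,\vf}_{\beta,\vmu}(t)\in\bZ[t]$. So the two halves of the theorem are logically intertwined in a way your outline does not capture, and the ``refined reciprocity'' you flag as the main obstacle is precisely the content of the two technical lemmas you would still need to prove.
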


Theorem \ref{thm:IntroOpenBPS} generalizes several interesting special cases previously known in the literature. Luo and Zhu \cite{LZ16,LZ19} considered the resolved conifold relative to a single brane and studied its genus-zero invariants with a general winding profile and higher-genus invariants ``with one hole'', i.e. the length of the partition is 1. It is worth noting that their method involves the computation of the Gromov-Witten invariants by the Bouchard-Klemm-Mari\~{n}o-Pasquetti \emph{Remodeling conjecture} \cite{BKMP09,BKMP10,EO15,FLZ20}. Zhu \cite{Zhu19} further studied higher-genus one-hole invariants of $\bC^3$ relative to a single brane. More recently, Panfil and Su\l{}kowski \cite{PS19} considered \emph{``strip geometries''}, which is a class of toric Calabi-Yau 3-folds without compact 4-cycles, and their \emph{disk invariants}, or genus-zero one-hole invariants, with boundary at a certain choice of a brane. Using the \emph{knots-quivers correspondence} \cite{KRSS17,KRSS19,EKL20a,EKL20b}, they related such invariants to the Donaldson-Thomas invariants of symmetric quivers, which are known to be integers \cite{Efimov12}. This perspective was taken by Bousseau, Brini, van Garrel \cite{BBvG20}, and Zhu \cite{Zhu22}. In particular, \cite{BBvG20} considered strip geometries that arise from \emph{Looijenga pairs}, or log Calabi-Yau surfaces with maximal boundary, and established an all-genus correspondence between the open and log Gromov-Witten invariants. They further proved the integrality and finiteness of the higher-genus open BPS invariants, which then carry over to the log side. Integrality properties of log invariants in connection with open invariants have also been studied in the recent works \cite{GRZ22,BS23,Schuler24}.

Compared to the previous works, Theorem \ref{thm:IntroOpenBPS} takes a very general form in the sense that it applies to a general toric Calabi-Yau 3-fold, any number of branes in an arbitrary configuration, and any winding profiles. In terms of techniques, the proof is closest to that of \cite{Peng07,Konishi06a,Konishi06b} in the closed sector which is based on the topological vertex.

\subsection{Closed BPS invariants of Calabi-Yau 4-folds and the open/closed correspondence}\label{sect:IntroClosed}
Back to the closed sector and in the genus zero case, the Gopakumar-Vafa formula \eqref{eqn:IntroGVResum} specializes to the Aspinwall-Morrison multiple covering formula \cite{AM93}
\begin{equation}\label{eqn:IntroAMResum}
    N^{X}_{0, \beta} \\
    = \sum_{k \mid \beta} \frac{n^{X}_{0,\frac{\beta}{k}}}{k^3}.
\end{equation}
This formula is also expected to exhibit the integrality of Gromov-Witten invariants of Calabi-Yau manifolds of higher dimensions. Klemm and Pandharipande \cite[Conjecture 0]{KP08} conjectured that for a Calabi-Yau 4-fold $Z$ and insertions $\gamma_1, \dots, \gamma_n \in H^*(Z;\bZ)$, the BPS invariants $n^{Z}_{0, \beta}(\gamma_1, \dots, \gamma_n)$ defined from the Gromov-Witten invariants $N^{Z}_{0, \beta}(\gamma_1, \dots, \gamma_n)$ by the formula
\begin{equation}\label{eqn:IntroKPResum}
    N^{Z}_{0, \beta}(\gamma_1, \dots, \gamma_n) \\
    = \sum_{k \mid \beta} \frac{n^{Z}_{0,\frac{\beta}{k}}(\gamma_1, \dots, \gamma_n)}{k^{3-n}}
\end{equation}
are integers. The conjecture has been verified in examples in \cite{KP08,CMT18,Cao20,CMT22,CKM22,COT22,COT24,BBvG20,BS23}. For a compact semi-positive symplectic manifold $Z$ of real dimension at least 6, the conjecture was proved by Ionel-Parker \cite{IP18}. We note that the approach of \cite{CMT18,Cao20,CMT22,CKM22,COT22,COT24} is based on the proposal that the BPS invariants of Calabi-Yau 4-folds admit sheaf-counting interpretations in terms of e.g. Donaldson-Thomas invariants and Pandharipande-Thomas invariants. We do not pursue this perspective in the present paper.

On the other hand, we take the approach of relating closed invariants of 4-folds to open invariants of 3-folds via the \emph{open/closed correspondence}, developed by Liu and the author \cite{LY21,LY22} based on the original proposal of Mayr \cite{Mayr01} and recent studies of correspondences among different types (open, log/relative, local) of enumerative invariants \cite{LLLZ09,FL13,vGGR19,BBvG20}. To be more specific, given an open geometry on a toric Calabi-Yau 3-fold $X$ relative to a single framed outer brane $(L,f)$, we consider its disk invariants $N^{X, L, f}_{0, \beta, (d)}$ where for an effective class $\beta \in H_2(X,L;\bZ)$, $d \in \bZ_{\ge 1}$ denotes the winding number at the unique boundary component of the curve and is determined by $\beta$. Assuming that $L$ remains an outer brane in a toric Calabi-Yau semi-projective partial compactification of $X$ (see Assumption \ref{assump:Outer}), on the level of Gromov-Witten theory, the open/closed correspondence produces a toric Calabi-Yau 4-fold $\tX$ together with an isomorphism $\iota: H_2(X, L;\bZ) \to H_2(\tX;\bZ)$ and a class $\tgamma \in H^4(\tX;\bZ)$ such that
$$
    N^{X, L, f}_{0, \beta, (d)} = N^{\tX}_{0, \iota(\beta)}(\tgamma)
$$
for any $\beta$.

Here we give a high-level summary of the construction of $\tX$, deferring a more detailed description to Section \ref{sect:OpenClosedGW}. Starting from the open geometry $(X,L,f)$, one first partially compactifies $X$ by adding an additional toric divisor $D$ at the location of the brane $L$, obtaining a toric 3-fold $Y = X \sqcup D$ such that the pair $(Y,D)$ is log Calabi-Yau. This step is the construction of \cite{LLLZ09,FL13} who identified the open Gromov-Witten invariants of $(X,L,f)$ with the relative invariants of $(Y,D)$. Then, $\tX$ is taken to be the total space of the canonical bundle $\cO_Y(-D)$, which is a toric Calabi-Yau 4-fold. This may be viewed as an instantiation of the \emph{log-local principle} \cite{vGGR19} which identifies the log/relative invariants of $(Y,D)$ with the closed invariants of the local geometry $\tX$. We note that when $X$ is semi-projective, \cite{LY22} further established the open/closed correspondence with $\tX$ replaced by its toric Calabi-Yau semi-projective partial compactification. 

Now in the case of disk invariants, the LMOV formula \eqref{eqn:IntroLMOVResum} specializes to
$$
    N^{X, L, f}_{0, \beta, (d)} = - \sum_{k \mid \beta, d} \frac{n^{X, L, f}_{0,\frac{\beta}{k}, \left(\frac{d}{k}\right)}}{k^2},
$$
which coincides up to sign with the Klemm-Pandharipande resummation \eqref{eqn:IntroKPResum} for invariants with 1-pointed insertions. Indeed, this was already observed by Bousseau, Brini, and van Garrel \cite{BBvG20} who obtained the integrality of BPS invariants of closed geometries arising from Looijenga pairs. We establish the open/closed correspondence for the BPS invariants of more general geometries.

\begin{theorem}[See Theorem \ref{thm:BPSCorrespondence}]\label{thm:IntroBPSCorrespondence}
For the open geometry $(X,L,f)$ and closed geometry $\tX$ under the open/closed correspondence above, given any effective class $\beta \in H_2(X,L;\bZ)$, we have
$$
    n^{X, L, f}_{0, \beta, (d)} = -n^{\tX}_{0, \iota(\beta)}(\tgamma).
$$
\end{theorem}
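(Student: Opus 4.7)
The plan is to combine three ingredients already assembled earlier in the paper: the Gromov-Witten level open/closed correspondence $N^{X,L,f}_{0,\beta,(d)} = N^{\tX}_{0,\iota(\beta)}(\tgamma)$, the specialization of the LMOV resummation \eqref{eqn:IntroLMOVResum} to disk invariants, and the Klemm-Pandharipande resummation \eqref{eqn:IntroKPResum} applied with a single insertion. Both the open and closed BPS invariants are extracted from their respective Gromov-Witten invariants by Möbius inversion of formally identical expressions, so once the Gromov-Witten sides are identified, a straightforward induction will force the BPS sides to coincide, up to the sign coming from the $(-1)^{\ell(\vmu)+g}$ factor in LMOV.

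Concretely, I would first record that \eqref{eqn:IntroLMOVResum} for $s=1$, $\vmu=(d)$, and $g=0$ collapses to
$$
    N^{X,L,f}_{0,\beta,(d)} = -\sum_{k \mid \beta,d} \frac{n^{X,L,f}_{0,\beta/k,(d/k)}}{k^2},
$$
since the prefactor $1/z_{(d)}=1/d$ cancels against the factor of $d$ produced by $2\sin(d g_s/2)$ at leading order in $g_s$, while \eqref{eqn:IntroKPResum} for $n=1$ reads
$$
    N^{\tX}_{0,\iota(\beta)}(\tgamma) = \sum_{k \mid \iota(\beta)} \frac{n^{\tX}_{0,\iota(\beta)/k}(\tgamma)}{k^2}.
$$
Substituting the Gromov-Witten open/closed correspondence into the two left-hand sides yields
$$
    -\sum_{k \mid \beta,d} \frac{n^{X,L,f}_{0,\beta/k,(d/k)}}{k^2} = \sum_{k \mid \iota(\beta)} \frac{n^{\tX}_{0,\iota(\beta)/k}(\tgamma)}{k^2}
$$
for every effective $\beta$. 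A direct induction on the primitivity of $\beta$ (equivalently, a Möbius inversion) then peels off the claimed identity $-n^{X,L,f}_{0,\beta,(d)} = n^{\tX}_{0,\iota(\beta)}(\tgamma)$.

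The main point that I expect to require care is the compatibility of the divisor sets indexing the two sums. I would verify: (i) that $k \mid \beta$ in $H_2(X,L;\bZ)$ automatically forces $k \mid d$, which follows from the fact that $d$ is the image of $\beta$ under the connecting map $\partial:H_2(X,L;\bZ)\to H_1(L;\bZ)\cong\bZ$, so the condition $k \mid \beta,d$ appearing in LMOV collapses to $k \mid \beta$; and (ii) that the lattice isomorphism $\iota:H_2(X,L;\bZ)\to H_2(\tX;\bZ)$ preserves divisibility, so that $k \mid \beta$ iff $k \mid \iota(\beta)$. Both checks are essentially bookkeeping, but they are exactly what make the Möbius inversion clean and the signs match. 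Once they are in place, combining the theorem with the integrality and finiteness established in Theorem \ref{thm:IntroOpenBPS} immediately transfers integrality to $n^{\tX}_{0,\iota(\beta)}(\tgamma)$, producing new examples for the Klemm-Pandharipande conjecture.
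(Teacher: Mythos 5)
Your proposal is correct and is essentially identical to the paper's argument: the paper also derives the identity by directly comparing the disk-invariant specialization of the LMOV formula with the one-insertion Klemm--Pandharipande resummation, notes that $k \mid \iota(\beta)$ if and only if $k \mid \beta, d$, and then invokes the Gromov--Witten level correspondence to force the BPS invariants to agree up to the sign. The two bookkeeping checks you flag are exactly the points the paper records before stating the theorem.
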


Combining this result with Theorem \ref{thm:IntroOpenBPS}, we obtain the integrality of the closed BPS invariants of $\tX$.

\begin{corollary}[See Corollary \ref{cor:ClosedBPS}]\label{cor:IntroClosedBPS}
Under the setup of Theorem \ref{thm:IntroBPSCorrespondence}, we have
$$
    n^{\tX}_{0, \iota(\beta)}(\tgamma) \in \bZ.
$$
\end{corollary}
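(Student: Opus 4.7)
The plan is to obtain this corollary as an immediate formal consequence of the two preceding main theorems, with no additional input required. Concretely, I would chain together Theorem \ref{thm:IntroBPSCorrespondence}, which provides the open/closed identity for BPS invariants in the disk/single-insertion case, and Theorem \ref{thm:IntroOpenBPS}, which gives the integrality of open BPS invariants extracted from the LMOV resummation.

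The first step is to invoke Theorem \ref{thm:IntroBPSCorrespondence} to write
$$
    n^{\tX}_{0, \iota(\beta)}(\tgamma) = -n^{X, L, f}_{0, \beta, (d)},
$$
thereby reducing the integrality statement on the closed 4-fold side to integrality on the open 3-fold side. The second step is to apply Theorem \ref{thm:IntroOpenBPS}, specialized to $g=0$, a single brane ($s=1$), framing $\vf=f$, and the length-one winding profile $\vmu=(d)$, which yields $n^{X, L, f}_{0, \beta, (d)} \in \bZ$. Substituting back into the previous identity produces the desired integrality of $n^{\tX}_{0, \iota(\beta)}(\tgamma)$.

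The corollary itself contains no genuine obstacle; all of the difficulty is absorbed into the two prerequisite results. In Theorem \ref{thm:IntroOpenBPS}, the real work lies in establishing the integrality and finiteness encoded in \eqref{eqn:IntroLMOVResum} in full generality, extending the Peng--Konishi topological vertex approach to arbitrary toric Calabi-Yau 3-folds, arbitrary brane configurations, and arbitrary winding profiles. In Theorem \ref{thm:IntroBPSCorrespondence}, the work consists of matching, class by class, the specialization of \eqref{eqn:IntroLMOVResum} at $g=0$, $s=1$, $\vmu=(d)$ with the Klemm-Pandharipande resummation \eqref{eqn:IntroKPResum} at $n=1$, under the open/closed dictionary that identifies curve classes via $\iota$ and pairs the $\tgamma$-insertion with the disk winding datum. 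Once both are in hand, the corollary follows by a single substitution, and with it the promised new class of Klemm-Pandharipande examples for toric Calabi-Yau 4-folds of the form $\Tot(\cO_Y(-D))$.
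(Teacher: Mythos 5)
Your proposal is correct and follows exactly the paper's own argument: Corollary \ref{cor:IntroClosedBPS} is obtained by combining the identity $n^{\tX}_{0, \iota(\beta)}(\tgamma) = -n^{X, L, f}_{0, \beta, (d)}$ from Theorem \ref{thm:IntroBPSCorrespondence} with the integrality of $n^{X, L, f}_{0, \beta, (d)}$ from Theorem \ref{thm:IntroOpenBPS} specialized to $g=0$, $s=1$, $\vmu=(d)$. No further comment is needed.
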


Therefore, we obtain a general class of non-compact examples for the conjecture of Klemm-Pandharipande \cite[Conjecture 0]{KP08} which covers certain previously known examples. We give a more detailed account in Section \ref{sect:ClosedBPS}.


\subsection{Conjectures on open BPS invariants of general geometries}
With the toric case as evidence, it would be interesting to study the integrality of open BPS invariants of more general geometries. Let $(X,\omega,J)$ be a Calabi-Yau symplectic 6-manifold together with an almost complex structure $J$ compatible with the symplectic form $\omega$, and $L \subset X$ be a Lagrangian submanifold that is orientable and has zero Maslov class. For simplicity, suppose $L$ is connected. In this case, the moduli space of $J$-holomorphic open stable maps to $(X,L)$ has virtual dimension 0 regardless of the genus or number of boundary components of the domain \cite{KL01}, and one may attempt to define all-genus open Gromov-Witten invariants of $(X,L)$, e.g. when $X$ is compact or $(X,L)$ admits a suitable $U(1)$-action. The open BPS invariants of $(X,L)$ may then be defined by the LMOV formula \eqref{eqn:IntroLMOVResum} and its generalizations.

We first make the following proposal for disk invariants.

\begin{conjecture}
Assume that $H_2(X,L;\bZ)$ is torsion free and the disk invariant $N^{X,L}_{0,\beta}$ is defined for any effective class $\beta \in H_2(X,L;\bZ)$. Then the open BPS invariants $n^{X,L}_{0,\beta}$ defined by the formula
$$
    N^{X, L}_{0, \beta} = - \sum_{k \mid \beta} \frac{n^{X, L}_{0,\frac{\beta}{k}}}{k^2}
$$
are integers for all $\beta$.
\end{conjecture}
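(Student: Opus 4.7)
The plan is to reduce the conjecture to the Klemm--Pandharipande integrality for closed Calabi-Yau $4$-folds by extending the open/closed correspondence of Theorem \ref{thm:IntroBPSCorrespondence} beyond the toric setting. Concretely, one would produce from $(X,L)$ a symplectic Calabi-Yau $4$-fold $\tX$, a cohomology class $\tgamma \in H^4(\tX;\bZ)$, and a homomorphism $\iota : H_2(X,L;\bZ) \to H_2(\tX;\bZ)$ such that
$$
    N^{X,L}_{0,\beta} \;=\; N^{\tX}_{0,\iota(\beta)}(\tgamma)
$$
for every effective class $\beta$. Since the Möbius inversion defining $n^{X,L}_{0,\beta}$ in the conjecture coincides, up to an overall sign, with the $n=1$ instance of the Klemm--Pandharipande formula \eqref{eqn:IntroKPResum}, such a comparison would transfer integrality from the closed to the open side exactly as in the passage from Theorem \ref{thm:IntroBPSCorrespondence} to Corollary \ref{cor:IntroClosedBPS}, yielding $n^{X,L}_{0,\beta} = -\, n^{\tX}_{0,\iota(\beta)}(\tgamma)$.

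The construction of $\tX$ should mirror the toric recipe of Section \ref{sect:IntroClosed} in two steps. First, extend the Lagrangian $L$ to a codimension-two almost complex submanifold $D$ inside a partial compactification $Y$ of $X$ with $(Y,D)$ log Calabi-Yau, and identify the disk invariants of $(X,L)$ with the genus-zero maximal-tangency relative invariants of $(Y,D)$ along the lines of \cite{LLLZ09,FL13}. Second, set $\tX := \Tot(\cO_Y(-D))$, which is Calabi-Yau of complex dimension four, and invoke a log-local correspondence to match relative invariants of $(Y,D)$ with genus-zero $1$-pointed invariants of $\tX$ against $\tgamma$, the Poincar\'e-dual of the zero section $Y \subset \tX$. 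Integrality of $n^{\tX}_{0,\iota(\beta)}(\tgamma)$ can then be imported from \cite{IP18} in the semi-positive compact case, from Corollary \ref{cor:IntroClosedBPS} in the toric case, or more broadly from any further progress on the Klemm--Pandharipande conjecture.

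The decisive obstacle is the first step. In the toric setting the passage $(X,L) \rightsquigarrow (Y,D)$ is a combinatorial move on the fan that essentially requires the $U(1)^2$-action preserving $L$, and for a generic Lagrangian there is no reason for such a divisorial filling to exist at all. Identifying the appropriate hypothesis --- for instance, a Hamiltonian $S^1$-action whose moment-map level set is $L$, an antisymplectic involution with fixed locus $L$, or the structure of a singular Lagrangian torus fibration in which $L$ is a regular fiber --- seems to be the crux of the problem. Secondary difficulties are that the open-to-relative and log-to-local comparisons in the toric setting rely on virtual torus localization and explicit topological-vertex computations and must be replaced by genuinely symplectic techniques (degeneration formulas, symplectic sum), and that the $\tX$ so produced is typically non-compact, placing it outside the scope of \cite{IP18}. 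A natural intermediate target, already substantially broader than the toric case treated here, is the class of Lagrangians arising as fixed loci of antisymplectic involutions on compact Calabi-Yau $3$-folds, where the first obstacle admits a clean resolution and the remaining two may be attacked with degeneration methods of the type used in the present paper and its predecessors.
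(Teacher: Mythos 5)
The statement you have set out to prove is stated in the paper only as a conjecture; the paper gives no proof of it, so there is nothing to compare your argument against on that front. More importantly, what you have written is not a proof but a research plan, and the plan's load-bearing step is precisely the one you yourself flag as unresolved. The entire reduction hinges on producing, from an arbitrary pair $(X,L)$, a log Calabi--Yau compactification $(Y,D)$ with $X = Y \setminus D$ such that the disk invariants of $(X,L)$ equal maximal-tangency genus-zero relative invariants of $(Y,D)$. In the paper this step is a combinatorial operation on the fan (adding the ray $b_{R+1}$) that uses the $T'$-invariance of the Aganagic--Vafa brane in an essential way, and the identification of open with relative invariants in \cite{LLLZ09,FL13} is proved by torus localization and the topological vertex. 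For a general Lagrangian there is no candidate for $D$, no definition of the relative theory that the disks are supposed to match, and hence no $\tX$; so the argument does not get off the ground. Even granting the construction, the two comparisons you invoke (open $=$ relative, relative $=$ local) are theorems only in the toric setting, and the log-local principle of \cite{vGGR19} is itself not available in the generality you need.

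There is a second, independent gap at the end of the chain: integrality of $n^{\tX}_{0,\iota(\beta)}(\tgamma)$ for the resulting $4$-fold is also conjectural. The space $\Tot(\cO_Y(-D))$ is never compact, so \cite{IP18} does not apply, and Corollary \ref{cor:IntroClosedBPS} only covers the toric case --- which is exactly the case already handled by Theorem \ref{thm:IntroOpenBPS}, making the reduction circular there and vacuous elsewhere. In short, you have correctly identified that the conjecture is formally equivalent, via the sign flip in Theorem \ref{thm:IntroBPSCorrespondence}, to a $1$-pointed instance of the Klemm--Pandharipande conjecture \emph{whenever} an open/closed correspondence holds, but you have not supplied either ingredient. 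It is also worth noting that the paper's own suggested route to the general conjecture is different from yours: rather than seeking a global divisorial filling, it proposes a local analysis in the style of \cite{BP08,IP18,DIW21}, computing multiple-cover contributions of an embedded bordered curve $(C,\partial C)$ from the local model $(N_{C/X}, N_{\partial C/L})$ with a fiberwise $U(1)$-action and then assembling the global class from finitely many embedded curves. That strategy avoids your decisive obstacle entirely and is likely the more promising direction.
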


We refer to \cite{PSW08} and the references therein for discussions on the case where $X$ is the quintic 3-fold and $L \subset X$ is the real quintic. The open/closed correspondence has also been extended to this geometry \cite{AL23}.

In addition, in the case where a notion of winding numbers can be defined, we make the following proposal.

\begin{conjecture}\label{conj:General}
Assume that $H_2(X,L;\bZ)$ is torsion free, $H_1(L;\bZ) \cong \bZ$, and the open Gromov-Witten invariant $N^{X,L}_{0,\beta,\mu}$ is defined for any genus $g \in \bZ_{\ge 0}$, effective class $\beta \in H_2(X,L;\bZ)$, and partition $\mu$ specifying the winding profile. Let the open BPS invariants $n^{X,L}_{g,\beta, \mu}$ be defined by the formula
$$    
    \sum_{g \in \bZ_{\ge 0}} N^{X, L}_{g, \beta, \mu} g_s^{2g-2+\ell(\mu)} 
    = \frac{1}{z_{\mu}} \sum_{k \mid \beta, \mu}  \sum_{g \in \bZ_{\ge 0}} (-1)^{\ell(\mu)+g}k^{\ell(\mu)-1}n^{X, L}_{g,\frac{\beta}{k}, \frac{\mu}{k}}\left(2\sin \frac{kg_s}{2} \right)^{2g-2} \prod_{j=1}^{\ell(\mu)} 2\sin \frac{\mu_jg_s}{2}.
$$
Then for any $\beta, \mu$, we have that $n^{X, L}_{g, \beta, \mu} \in \bZ$ for all $g \in \bZ_{\ge 0}$ and is zero for $g \gg 0$.
\end{conjecture}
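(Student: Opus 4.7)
The plan is to adapt the symplectic approach of Ionel-Parker and Doan-Ionel-Walpuski, which established Gopakumar-Vafa integrality and genus finiteness in the closed Calabi-Yau 3-fold setting, to the open setting with Lagrangian boundary. The first step is to recast the LMOV identity as a plethystic logarithm of the all-genus open generating function, so that the integrality of $n^{X,L}_{g,\beta,\mu}$ becomes equivalent to an explicit lattice statement for the open Gromov-Witten generating function viewed as a power series in $g_s$ and the curve-class variables, while the finiteness $n^{X,L}_{g,\beta,\mu} = 0$ for $g \gg 0$ becomes the statement that each primitive contribution is a Laurent polynomial in $q = e^{ig_s}$ of bounded degree.

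Next, I would attack these two statements via a cluster formalism on the moduli space of $J$-holomorphic bordered surfaces with boundary on $L$ and prescribed winding profile $\mu$. For a generic compatible $J$ and a generic perturbation of $L$, the moduli of embedded primitive representatives should be cut out cleanly, and one needs to show that the multi-cover contributions to $N^{X,L}_{g,\beta,\mu}$ assemble into the characteristic LMOV factors $(2\sin(kg_s/2))^{2g-2}$ and $\prod_j 2\sin(\mu_j g_s/2)$. In the closed case these factors arise from equivariant K-theoretic weights of the normal bundle together with obstruction bundle contributions; for an embedded bordered surface $\Sigma$ with boundary on $L$, the normal data is a bundle pair $(E,F)$ with $F \subset E|_{\partial \Sigma}$ totally real, and the required identity should reduce to a K-theoretic multi-cover computation for such pairs matching the LMOV expansion. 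Once primitive contributions are established as integer-valued Laurent polynomials of bounded degree, both integrality and finiteness in the conjecture follow.

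The main obstacle will be analytic rather than algebraic: extending the transversality and compactness arguments of Doan-Ionel-Walpuski to moduli of open maps requires controlling disk bubbling, sphere bubbling along the boundary, and degenerations that mix boundary and interior nodes, all while preserving the winding profile $\mu$. A closely related foundational difficulty is that for a general $(X,L)$ the very definition of $N^{X,L}_{g,\beta,\mu}$ requires auxiliary choices such as a bounding cochain or a torus action, so the integrality proof must be shown to be independent of these choices. Provided these analytic and foundational issues can be handled, the combinatorial core of the argument, namely the symmetric-group character identities underlying the LMOV expansion, should transfer directly from the toric proof of Theorem \ref{thm:IntroOpenBPS}.
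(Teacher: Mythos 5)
This statement is labeled as a \emph{conjecture} in the paper, and the paper offers no proof of it: the only integrality result actually established (Theorem \ref{thm:IntroOpenBPS}) is for toric Calabi--Yau $3$-folds with Aganagic--Vafa outer branes, and its proof is algebro-combinatorial, running through the topological vertex, the character sums of Theorem \ref{thm:TopVertex}, and Konishi-style manipulations in the ring $\cL[t]$ (Lemmas \ref{lem:GIntegral} and \ref{lem:GFinite}). Your proposal is therefore not being compared against an existing argument; it is a research program for an open problem. As such it is broadly reasonable --- indeed it coincides almost verbatim with the heuristic the paper itself sketches in the paragraph immediately following the conjecture: study local contributions of embedded bordered curves via the normal bundle pair $(E, E_{\bR}) = (N_{C/X}, N_{\partial C/L})$, define local/residue open invariants with a fiberwise $U(1)$-action as in Bryan--Pandharipande and Ionel--Parker, and assemble the global statement cluster by cluster in the spirit of Doan--Ionel--Walpuski.

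The genuine gaps are the ones you partly acknowledge, but they are more serious than your closing sentence suggests. First, the claim that ``the combinatorial core \ldots should transfer directly from the toric proof of Theorem \ref{thm:IntroOpenBPS}'' is not right: the toric proof is global in the FTCY graph --- it sums amplitudes $Y_{\vlambda}(q)$ over edge-labelings of the entire graph and uses the conjugation symmetry $\vlambda \mapsto \vlambda^t$ together with properties of the three-point function $\cW$ --- and nothing in it localizes to a statement about a single embedded bordered curve with normal data $(E,E_{\bR})$. The multiple-cover computation for such bundle pairs that would produce the factors $(2\sin(kg_s/2))^{2g-2}\prod_j 2\sin(\mu_j g_s/2)$ is precisely the missing open analogue of the local Gromov--Witten theory of curves, and it is not done in this paper or, to my knowledge, anywhere else. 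Second, in the generality of the conjecture the invariants $N^{X,L}_{g,\beta,\mu}$ are not even defined; the integrality statement cannot be proved before the foundational issues (bounding cochains, orientation of moduli, independence of auxiliary choices) are resolved, and your proposal treats this as a side condition rather than as the first obstruction. Third, the transversality, compactness, and cluster-decomposition analysis of Doan--Ionel--Walpuski for open maps --- controlling disk bubbling and boundary degenerations while preserving the winding profile --- is a substantial open analytic problem, not a routine extension. In short: your plan identifies the right strategy (the same one the author proposes), but every one of its three main steps is currently unproved, so this should be presented as a program, not a proof.
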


It would be interesting to generalize the LMOV formula \eqref{eqn:IntroLMOVResum} to the case where $H_1(L;\bZ)$ has higher rank and study the integrality and finiteness of open BPS invariants in the general setup. In any case, for a general geometry $(X,L)$, the LMOV formula can be employed to study the \emph{local} contributions of embedded (bordered) curves and their multiple covers, following the approach of \cite{IP18,DIW21}. To be more specific, let $(C, \partial C) \subset (X, L)$ be a bordered embedded curve and consider the pair $(E, E_{\bR})$, where $E := N_{C/X}$ and $E_{\bR} := N_{\partial C/L}$ which is a Lagrangian (real) subbundle of $N_{C/X} \big|_{\partial C}$ (c.f. \cite{KL01}). As in Bryan-Pandharipande \cite{BP08} and Ionel-Parker \cite{IP18}, one may consider the local Calabi-Yau geometry on the total space of $(E, E_{\bR})$, define a local (or residue) version of open Gromov-Witten invariants using a suitable fiberwise $U(1)$-action, define a local version of open BPS invariants using the LMOV formula, and study their integrality and finiteness as in Conjecture \ref{conj:General}. This would account for the multiple-cover contributions of $(C, \partial C)$. Under the philosophy of \cite{IP18,DIW21}, the global statement for a relative curve class $\beta \in H_2(X,L;\bZ)$ would follow from collecting contributions from a finite set of embedded curves in this class.

\subsection{Acknowledgments}
This work originates from the author's joint project with Chiu-Chu Melissa Liu \cite{LY21,LY22} and I would like to thank Melissa for many valuable discussions and suggestions. I would like to thank Eleny Ionel for enlightening discussions on the study of open BPS invariants of general geometries. I would like to thank Bohan Fang, Yannik Schuler, and Zhengyu Zong for helpful discussions. I would also like to thank the authors of \cite{BBvG20} for their inspiring results on the open and closed BPS invariants. Finally, I would like to thank the anonymous referees for the valuable comments which greatly helped improve the paper.

\section{Open Gromov-Witten invariants and the topological vertex}\label{sect:OpenGW}
In this section, we briefly review the open geometry setup, open Gromov-Witten invariants, and the topological vertex formalism, introducing notations along the way. We refer to \cite{LLLZ09,FL13} for full definitions and additional details. We work over $\bC$.

\subsection{Preliminaries}\label{sect:Partition}
We start with the preliminaries on partitions. 
\subsubsection{Notations on a single partition}
A \emph{partition} is a non-increasing sequence
$$
    \lambda = (\lambda_1, \lambda_2, \dots)
$$
of non-negative integers where only finitely many are non-zero. We set up the following notations on $\lambda$:

\begin{itemize}
    \item The \emph{length} of $\lambda$ is the number of non-zero terms, called \emph{parts} of $\lambda$:
    $$
        \ell(\lambda) := |\{j \mid \lambda_j \neq 0\}|.
    $$
    We will interchangeably write $\lambda$ as the finite sequence consisting of its parts:
        $$
        \lambda = (\lambda_1, \dots, \lambda_{\ell(\lambda)})
        $$
    
    \item The \emph{degree} of $\lambda$ is the sum of all parts:
        $$
            |\lambda| := \sum_{j=1}^{\ell(\lambda)} \lambda_j.
        $$
    If $d = |\lambda|$, we say that $\lambda$ is a partition of $d$ and write $\lambda \vdash d$.

    \item For $a \in \bZ_{\ge 1}$, the \emph{multiplicity} of $a$ in $\lambda$ is the number of occurrences of $a$ in $\lambda$:
        $$
            m_a(\lambda) := |\{j \mid \lambda_j = a\}|.
        $$
    
    \item The \emph{automorphism group} of $\lambda$, denoted $\Aut(\lambda)$, is the group of permutations of parts of $\lambda$ that preserve $\lambda$. We have
        $$
            \Aut(\lambda) \cong \prod_{a \in \bZ_{\ge 1}} S_{m_a(\lambda)}, \qquad |\Aut(\lambda)| = \prod_{a \in \bZ_{\ge 1}} m_a(\lambda)!
        $$
    where $S_m$ denotes the symmetric group on $m$ letters.

    \item We set
        $$
            z_\lambda := |\Aut(\lambda)| \prod_{j=1}^{\ell(\lambda)} \lambda_j.
        $$

    \item The \emph{conjugate partition} of $\lambda$ is the sequence $\lambda^t = (\lambda^t_1, \lambda^t_2, \dots)$ where
        $$
            \lambda^t_i := \sum_{a \in \bZ_{\ge i}} m_a(\lambda).
        $$

    \item We set
        $$
            \kappa_\lambda := \sum_{j=1}^{\ell(\lambda)} \lambda_j(\lambda_j - 2j + 1),
        $$
    which is even and satisfies
        $$
            \kappa_{\lambda^t} = - \kappa_\lambda.
        $$

    \item For $k \in \bZ_{\ge 1}$, we set
        $$
            k\lambda := (k\lambda_1, \dots, k\lambda_{\ell(\lambda)}).
        $$
    We have
        $$
            \ell(k\lambda) = \ell(\lambda), \quad |k\lambda| = k |\lambda|, \quad \Aut(k\lambda) \cong \Aut(\lambda), \quad z_{k\lambda} = k^{\ell(\lambda)}z_{\lambda}.
        $$
    On the other hand, if there exists a partition $\mu$ such that $\lambda = k\mu$, we write $k \mid \lambda$ and $\mu = \frac{\lambda}{k}$.
    
    \item For $k \in \bZ_{\ge 1}$, let $\lambda^{(k)}$ denote the partition where
        $$
            m_a(\lambda^{(k)}) = k m_a(\lambda)
        $$
    for all $a \in \bZ_{\ge 1}$. On the other hand, if there exists a partition $\mu$ such that $\lambda = \mu^{(k)}$, we write $\mu = \lambda^{(\frac{1}{k})}$.
\end{itemize}

\subsubsection{Notations on multiple partitions}
Let $\cP$ denote the set of all partitions. Given $\lambda^1, \dots, \lambda^t \in \cP$, we define
$$
    \bigsqcup_{i = 1}^t \lambda^i
$$
to be the partition whose set of parts is the disjoint union of all sets of parts of $\lambda^1, \dots, \lambda^t$. In particular, when $\lambda^1 = \cdots = \lambda^t = \lambda$, $\bigsqcup_{i = 1}^t \lambda^i = \lambda^{(t)}$. We have
$$
    \ell\left( \bigsqcup_{i = 1}^t \lambda^i\right) = \sum_{i=1}^t \ell(\lambda^i), \qquad \left|\bigsqcup_{i = 1}^t \lambda^i\right| = \sum_{i=1}^t |\lambda^i|.
$$
Moreover, $\prod_{i=1}^t \Aut(\lambda^i)$ is a subgroup of $\Aut \left(\bigsqcup_{i = 1}^t \lambda^i \right)$ and
$$
    \prod_{i=1}^t z_{\lambda^i} \biggm| z_{\bigsqcup_{i = 1}^t \lambda^i}.
$$
For partitions $\mu, \nu \in \cP$, we write
$$
    \nu \subseteq \mu
$$
if there exists $\nu' \in \cP$ such that $\mu = \nu \sqcup \nu'$, or equivalently $m_a(\nu) \le m_a(\mu)$ for all $a \in \bZ_{\ge 1}$.

Now given a sequence of partitions $\vmu = (\mu^1, \dots, \mu^s) \in \cP^s$, we denote
$$
    \ell(\vmu) := \sum_{i = 1}^s \ell(\mu^i), \qquad |\vmu| := \sum_{i=1}^s |\mu^i|, \qquad z_{\vmu}:= \prod_{i=1}^s z_{\mu^i}.
$$
For $k \in \bZ_{\ge 1}$, we define
$$
    k\vmu := (k\mu^1, \dots, k\mu^s), \qquad \vmu^{(k)} := ((\mu^1)^{(k)}, \dots, (\mu^s)^{(k)}),
$$
and $\frac{\vmu}{k}$ (resp. $\vmu^{(\frac{1}{k})}$) similarly if $\frac{\mu^i}{k}$ (resp. $(\mu^i)^{(\frac{1}{k})}$) exists for all $i$. Now for another sequence $\vnu = (\nu^1, \dots, \nu^s) \in \cP^s$, we define
$$
    \vmu \sqcup \vnu := (\mu^1 \sqcup \nu^1, \dots, \mu^s \sqcup \nu^s),
$$
and we write
$$
    \vnu \subseteq \vmu
$$
if $\nu^i \subseteq \mu^i$ for all $i$.

\subsubsection{Three point function}
Finally, we define a function associated to triples of partitions following \cite{AKMV03, Zhou03, LLLZ09, Konishi06b}. Let $q$ be a formal variable and write
$$
    q^{\rho} = \left(q^{-\frac{1}{2}}, q^{-\frac{3}{2}}, \cdots \right), \qquad
    q^{\lambda + \rho} =  \left(q^{\lambda_1-\frac{1}{2}}, q^{\lambda_2-\frac{3}{2}}, \cdots \right) \quad \text{for $\lambda \in \cP$.}
$$
For $\lambda, \eta \in \cP$, let $s_{\lambda}, s_{\lambda/\eta}$ denote the Schur function and skew Schur function respectively.

\begin{definition}\label{def:ThreePoint}
\rm{
For a triple $(\lambda^1, \lambda^2, \lambda^3) \in \cP^3$ of partitions, define
$$
    \cW_{\lambda^1, \lambda^2, \lambda^3}(q) := q^{\frac{\kappa_{\lambda^3}}{2}}s_{\lambda^2}(q^\rho) \sum_{\eta \in \cP} s_{\lambda^1/\eta}(q^{(\lambda^2)^t+\rho})s_{(\lambda^3)^t/\eta}(q^{\lambda^2+\rho}).
$$
}
\end{definition}

Our expression is the same as \cite[Definition 2.2]{Konishi06b} and is equivalent to that of \cite{AKMV03,Zhou03,LLLZ09} (see \cite[Definition 2.1]{LLLZ09}) by \cite[Proposition 4.4]{Zhou03}. $\cW_{\lambda^1, \lambda^2, \lambda^3}(q)$ is a rational function in $q^{\frac{1}{2}}$ and has cyclic symmetry \cite{ORV03}
$$
    \cW_{\lambda^1, \lambda^2, \lambda^3} = \cW_{\lambda^2, \lambda^3, \lambda^1} = \cW_{\lambda^3, \lambda^1, \lambda^2}.
$$
We refer to \cite{Zhou03} for additional properties.

\subsection{Open geometry}\label{sect:OpenGeometry}
Let $N \cong \bZ^3$ and $X$ be a smooth toric Calabi-Yau 3-fold defined by a finite fan $\Sigma$ in $N \otimes \bR \cong \bR^3$. We assume that $\Sigma$ contains at least one 3-cone and that every cone in $\Sigma$ is a face of a 3-cone. The Calabi-Yau condition, i.e. the triviality of the canonical bundle $K_X$ of $X$, is equivalent to the existence of $\su_3 \in M := \Hom(N, \bZ)$ such that
$$
    \inner{\su_3, v} = 1
$$
for all primitive generators $v \in N$ of rays of $\Sigma$, where $\inner{-,-}: M \times N \to \bZ$ is the natural pairing. Let $N' := \ker(\su_3) \cong \bZ^2 \subset N$. The algebraic 3-torus of the toric 3-fold $X$ is $T := N \otimes \bC^* \cong (\bC^*)^3$, which contains the Calabi-Yau 2-subtorus $T' := N' \otimes \bC^* \cong (\bC^*)^2$. We complete $\su_3$ into a $\bZ$-basis $\{\su_1, \su_2, \su_3\}$ of $M$. Then
$$
    H^*_{T'}(\pt;\bZ) \cong \bZ[\su_1, \su_2].
$$

We set up some notations. For $d = 0,1,2,3$, let $\Sigma(d)$ to denote the set of $d$-cones in $\Sigma$. For a cone $\sigma \in \Sigma(d)$, let $V(\sigma)$ denote the codimension-$d$ $T$-orbit closure in $X$. In particular, the set $\Sigma(3)$ of maximal cones corresponds to the set of $T$-fixed points, and $\Sigma(2)$ corresponds to the set of $T$-invariant lines, which are isomorphic to either $\bC$ or $\bP^1$. We note that the action of the subtorus $T'$ has the same sets of fixed points and invariant lines. Let
$$
    \Sigma(2)_c := \{\tau \in \Sigma(2) \mid V(\tau) \cong \bP^1\}.
$$
We define the 1-skeleta
$$
    X^1 := \bigcup_{\tau \in \Sigma(2)} V(\tau), \qquad X^1_c := \bigcup_{\tau \in \Sigma(2)_c} V(\tau) \subset X^1.
$$
The inclusion $X^1_c \to X^1 \to X$ induces a surjective group homomorphism
\begin{equation}\label{eqn:CurveClass}
    H_2(X^1_c;\bZ) = H_2(X^1;\bZ) = \bigoplus_{\tau \in \Sigma(2)_c} \bZ[V(\tau)] \to H_2(X;\bZ).
\end{equation}

Let $s \in \bZ_{\ge 1}$ and 
$$
    L = L_1 \sqcup \cdots \sqcup L_s
$$
be a disjoint union of $s$ \emph{Aganagic-Vafa outer branes} in $X$ (see \cite[Section 2]{FL13}). Each $L_i$ is a Lagrangian submanifold of $X$ diffeomorphic to $S^1 \times \bC$ and invariant under the action of the maximal compact subtorus $T'_{\bR} \cong U(1)^2$ of $T'$. Moreover, each $L_i$ intersects a unique 1-dimensional $T$-orbit closure $V(\tau_i)$ for some $\tau_i \in \Sigma(2) \setminus \Sigma(2)_c$. The intersection $L_i \cap V(\tau_i)$ is isomorphic to $S^1$ and bounds a holomorphic disk $B_i$ in $V(\tau_i) \cong \bC$ centered around the $T$-fixed point $V(\sigma_i)$, where $\sigma_i \in \Sigma(3)$ is the unique 3-cone that contains $\tau_i$. We orient the disks $B_i$'s by the holomorphic structure of $X$. We assume that $\tau_1, \dots, \tau_s$ are distinct. Then
\begin{equation}\label{eqn:RelHomology}
    H_2(X,L;\bZ) = H_2(X; \bZ) \oplus \bigoplus_{i=1}^s \bZ[B_i], \qquad H_1(L; \bZ) =  \bigoplus_{i=1}^s \bZ[\partial B_i].
\end{equation}
Moreover, we choose additional parameters
$$
    \vf = (f_1, \dots, f_s)
$$
where for $i = 1, \dots, s$, $f_i \in \bZ$ is called the \emph{framing} of the brane $L_i$.

\subsection{FTCY graphs}\label{sect:FTCY}
The toric Calabi-Yau 3-fold $X$ can be equivalently described by its associated \emph{formal toric Calabi-Yau (FTCY) graph} $\Gamma_X$ as in \cite[Section 3]{LLLZ09}. The underlying graph of $\Gamma_X$ is a trivalent graph where:
\begin{itemize}
    \item The set $V(\Gamma_X)$ of vertices corresponds to the set $\Sigma(3)$ of 3-cones, or equivalently the set of $T$-fixed points. For $v \in V(\Gamma_X)$ we write $\sigma^v \in \Sigma(3)$ for the corresponding $3$-cone.
    
    \item The set $E(\Gamma_X)$ of (unoriented) edges corresponds to the set $\Sigma(2)$ of 2-cones, or equivalently the set of $T$-invariant lines. The subset $E_c(\Gamma_X)$ of compact edges corresponds to $\Sigma(2)_c$.\footnote{As in \cite[Section 3.1]{FL13}, we do not replace the non-compact edges by compact ones ending at univalent vertices, which is done in \cite{LLLZ09}.} For $\bar{e} \in E(\Gamma_X)$ we write $\tau^{\bar{e}} \in \Sigma(2)$ for the corresponding $2$-cone. 
    
    \item An edge $\bar{e} \in E(\Gamma_X)$ is incident to a vertex $v \in V(\Gamma_X)$ if and only if $\tau^{\bar{e}}$ is a facet of $\sigma^v$.
\end{itemize}
The data of $\Gamma_X$ also consists of a \emph{position map}
$$
    \fp: E^o(\Gamma_X) \to \bZ^2 \setminus \{0\},
$$
where $E^o(\Gamma_X)$ is the set of \emph{oriented} edges. To describe $\fp$, we may draw $\Gamma_X$ on the hyperplane $(N' \otimes \bR) \times \{1\}$ in $N \otimes \bR$ as a planar graph dual to the triangulated polytope given by the cross-section of the fan $\Sigma$. Under the coordinates on $M$ specified by the choice of basis $\{\su_1, \su_2, \su_3\}$, and thus the dual coordinates on $N$, $\fp(e)$ is given by the primitive integral vector in the direction of the oriented edge $e \in E^o(\Gamma_X)$ in the planar drawing.

As in \cite[Section 3.1]{FL13}, the framed Aganagic-Vafa branes $(L_1, f_1), \dots, (L_s,f_s)$ determine a FTCY graph
$$
    \Gamma = \Gamma_{X, L, \vf}
$$
obtained by replacing the non-compact edge in $\Gamma_X$ corresponding to $\tau_i$ by a compact edge $\bar{e}_i$ ending at a univalent vertex $v_i$ and with \emph{framing vector} $\ff_i \in \bZ^2 \setminus \{0\}$ depending on $f_i$. The set of vertices in $\Gamma$ is decomposed as
$$
    V(\Gamma) = V^3(\Gamma) \sqcup V^1(\Gamma)
$$
where $V^3(\Gamma) := V(\Gamma_X)$ is the set of trivalent vertices in $\Gamma$ and $V^1(\Gamma):= \{v_1, \dots, v_s\}$ is the set of univalent vertices. The set of (unoriented) compact edges in $\Gamma$ is decomposed as
$$
    E_c(\Gamma) = E_c^3(\Gamma) \sqcup E_c^1(\Gamma)
$$
where $E_c^3(\Gamma) := E_c(\Gamma_X)$ and $E_c^1(\Gamma) := \{\bar{e}_1, \dots, \bar{e}_s\}$.

As in \cite[Definition 3.4]{LLLZ09}, the position map $\fp$ and the framing vectors $\ff_i$'s together define a map
$$
    \vn: E^o(\Gamma) \to \bZ,
$$
where $E^o(\Gamma)$ is the set of oriented edges in $\Gamma$. We write $n^e := \vn(e)$. Let $E^o_c(\Gamma)$ denote the subset of oriented compact edges and for $e \in E^o_c(\Gamma)$, let $-e \in E^o_c(\Gamma)$ denote the edge with opposite orientation. Then $\vn$ satisfies that
$$
    n^{-e} = -n^e.
$$
For oriented edges $\pm e \in E^o_c(\Gamma)$ whose corresponding unoriented edge $\bar{e}$ belongs to $E_c^3(\Gamma)$ and corresponds to $\tau \in \Sigma(2)$, $n^{\pm e}$ is determined by the degree of the normal bundle of $V(\tau)$ in $X$:
$$
    N_{V(\tau)/X} \cong \cO_{\bP^1}(n^e-1) \oplus \cO_{\bP^1}(n^{-e}-1).
$$
For the oriented edge $e_i$ given by orienting $\bar{e}_i$ as terminating at $v_i$, we have
$$
    n^{e_i} = f_i.
$$

Let $\hX$ denote the FTCY 3-fold associated to the FTCY graph $\Gamma_X$, which is the formal completion of $X$ along the 1-skeleton $X^1$. Let $(\hY, \hD)$ denote the relative FTCY 3-fold associated to $\Gamma$, which satisfies $\hY \setminus \hD = \hX$ and admits an action of the Calabi-Yau 2-torus $T'$. The divisor $\hD$ is a disjoint union of its $s$ connected components $\hD^s, \dots, \hD^s$ corresponding to the $s$ univalent vertices $v_1, \dots, v_s \in V^1(\Gamma)$. Each edge $\bar{e}_i \in E^1_c(\Gamma)$ corresponds to a $T'$-invariant projective line $C_i$ in $\hY$ which is the compactification of $V(\tau_i) \subset \hX$ by the $T'$-fixed point contained in $\hD^i$. The homomorphism \eqref{eqn:CurveClass} extends to a surjective group homomorphism
\begin{equation}\label{eqn:RelCurveClass}
    \pi: H_2(\hY;\bZ) = H_2(X^1;\bZ) \oplus \bigoplus_{i=1}^s \bZ[C_i] \to H_2(X,L;\bZ) = H_2(X; \bZ) \oplus \bigoplus_{i=1}^s \bZ[B_i]
\end{equation}
that maps each $[C_i]$ to $[B_i]$ (see \eqref{eqn:RelHomology}).

\subsection{Gromov-Witten invariants}\label{sect:GW}
Let $g \in \bZ_{\ge 0}$. Let
$$
    \beta = \beta' + \sum_{i=1}^s d_i[B_i] \qquad \in H_2(X,L; \bZ)
$$
where $\beta' \in H_2(X;\bZ)$ is an effective class\footnote{Our notations $\beta, \beta'$ are interchanged from those in \cite{FL13}.} and $(d_1, \dots, d_s) \in \bZ_{\ge 0}^s \setminus \{(0, \dots, 0)\}$. Let
$$
    \vmu = (\mu^1, \dots, \mu^s) \in \cP^s \setminus \{(\emptyset, \dots, \emptyset)\}
$$
be partitions with $\mu^i \vdash d_i$ for $i = 1, \dots, s$, where $\emptyset$ denotes the empty partition.

\begin{definition}\label{def:OpenClass}
\rm{
The pair $(\beta, \vmu)$ as above is called an \emph{effective class} of $(X, L, \vf)$. Let
$$
    \Eff(X, L, \vf)
$$
denote the set of all effective classes.
}
\end{definition}

The genus-$g$, degree-$(\beta, \vmu)$ \emph{open Gromov-Witten invariant}
$$
    N^{X, L, \vf}_{g, \beta, \vmu} \in \bQ
$$
is a virtual count of open stable maps
$$
    u: \left(C, \partial C = \bigsqcup_{i = 1}^s \bigsqcup_{j = 1}^{\ell(\mu^i)} R^i_j \right) \to (X,L)
$$
where:
\begin{itemize}
    \item $C$ is a connected prestable bordered Riemann surface of arithmetic genus $g$ and whose boundary $\partial C$ consists of $\sum_{i=1}^s \ell(\mu^i)$ components $R^i_j \cong S^1$.
    
    \item $u_*[C] = \beta \in H_2(X,L;\bZ)$ and $u_*[R^i_j] = \mu^i_j[\partial B_i] \in H_1(L^i; \bZ)$.
\end{itemize}

Following \cite[Section 3.4]{FL13}, we define $N^{X, L, \vf}_{g, \beta, \vmu}$ by \emph{formal relative Gromov-Witten invariants} of the relative FTCY 3-fold $(\hY, \hD)$.\footnote{In the case $s=1$ of a single brane, see \cite[Section 3.5]{FL13} for an equivalent definition by virtual integration on the moduli spaces of open stable maps; see also \cite{KL01}.} Let
$$
    \vd: E_c(\Gamma) \to \bZ_{\ge 0}
$$
such that the image of the effective class
$$
    \sum_{\bar{e} \in E^3_c(\Gamma)} \vd(\bar{e})[V(\tau^{\bar{e}})] + \sum_{i = 1}^s \vd(\bar{e}_i)[C_i] \qquad \in H_2(\hY;\bZ),
$$
under the map $\pi$ (see \eqref{eqn:RelCurveClass}) is $\beta$.\footnote{Our notation $\vd$ corresponds to $\vd'$ in \cite{FL13}.} In particular, $\vd(\bar{e}_i) = d_i$ for $i = 1, \dots, s$. We denote this effective class of $\hY$ also by $\vd$ by an abuse of notation. 

\begin{definition}\label{def:RelClass}
\rm{
The pair $(\vd, \vmu)$ as above is called an \emph{effective class} of the FTCY graph $\Gamma$. Let
$$
    \Eff(\Gamma)
$$
denote the set of all effective classes.
}
\end{definition}

The genus-$g$, degree-$(\vd, \vmu)$ formal relative Gromov-Witten invariant
$$
    N^{\hY, \hD}_{g, \vd, \vmu} \in \bQ
$$
is a virtual count of relative stable maps
$$
    u: \left(C, \{q^i_j \mid i = 1, \dots, s, j = 1, \dots, \ell(\mu^i) \} \right) \to (\hY,\hD)
$$
where:
\begin{itemize}
    \item $C$ is a connected prestable (borderless) Riemann surface of arithmetic genus $g$ and $\{q^i_j\}$ is a set of distinct smooth points on $C$.
    
    \item $u_*[C] = \vd \in H_2(\hY;\bZ)$ and $u^{-1}(\hD^i) = \sum_{j=1}^{\ell(\mu^i)} \mu^i_j q^i_j$ as Cartier divisors.
\end{itemize}
The invariant $F^{\hY, \hD}_{g, \vd, \vmu}$ is defined by $T'$-equivariant localization on the corresponding moduli space of relative stable maps; see \cite[Section 4]{LLLZ09}, \cite[Section 3.2]{FL13}. A priori, it is a rational function in the equivariant parameters $\su_1, \su_2$ that is homogeneous of degree zero, but it is in fact a rational number independent of $\su_1, \su_2$ \cite{LLLZ09}. Then, the open Gromov-Witten invariant is defined as
$$
    N^{X, L, \vf}_{g, \beta, \vmu} := (-1)^{|\vmu|-\ell(\vmu)}\sum_{\vd: \pi(\vd) = \beta} N^{\hY, \hD}_{g, \vd, \vmu}.
$$

We consider the generating functions
$$
    F^{X, L, \vf}_{\beta, \vmu}(g_s) := \sum_{g \in \bZ_{\ge 0}} N^{X, L, \vf}_{g, \beta, \vmu} (g_s)^{2g-2+\ell(\vmu)}, \qquad F^{\hY, \hD}_{\vd, \vmu}(g_s) := \sum_{g \in \bZ_{\ge 0}} N^{\hY, \hD}_{g, \vd, \vmu} (g_s)^{2g-2+\ell(\vmu)}
$$
where $g_s$ is a formal variable. Then
\begin{equation}\label{eqn:OpenRelFunction}
    F^{X, L, \vf}_{\beta, \vmu}(g_s) = (-1)^{|\vmu|-\ell(\vmu)}\sum_{\vd: \pi(\vd) = \beta}F^{\hY, \hD}_{\vd, \vmu}(g_s).
\end{equation}
Moreover, let
$$
    \{Q^\beta \mid \beta \in H_2(X,L; \bZ) \text{ effective}\}, \qquad \{Q^{\vd} \mid \vd : E_c(\Gamma) \to \bZ_{\ge 0}\}, \qquad \{P_{\vmu} \mid \vmu \in \cP^s\}
$$
be formal variables with multiplication rules
$$
    Q^{\beta_1}Q^{\beta_2} = Q^{\beta_1 + \beta_2}, \qquad Q^{\vd_1}Q^{\vd_2} = Q^{\vd_1 + \vd_2}, \qquad  P_{\vmu} P_{\vnu} = P_{\vmu \sqcup \vnu},
$$
where for $\vmu = (\mu^1, \dots, \mu^s), \vnu = (\nu^1, \dots, \nu^s) \in \cP^s$ we write $\vmu \sqcup \vnu = (\mu^1 \sqcup \nu^1, \dots, \mu^s \sqcup \nu^s)$. We define
\begin{align*}
    & F^{X, L, \vf}(g_s, Q, P) := \sum_{(\beta, \vmu) \in \Eff(X, L, \vf)} F^{X, L, \vf}_{\beta, \vmu}(g_s)Q^\beta P_{\vmu}, \\
    & F^{\hY, \hD}(g_s, Q, P) := \sum_{(\vd, \vmu) \in \Eff(\Gamma)}F^{\hY, \hD}_{\vd, \vmu}(g_s) Q^{\vd}P_{\vmu}.
\end{align*}

\subsection{The topological vertex}\label{sect:TopVertex}
We now state the computation of $F^{\hY, \hD}(g_s, Q, P)$ using the \emph{topological vertex} formalism \cite{AKMV03, LLLZ09}. We define
$$
    Z^{\hY, \hD}(g_s, Q, P) := \exp \left( F^{\hY, \hD}(g_s, Q, P) \right)
$$
and write
$$
    Z^{\hY, \hD}(g_s, Q, P) = 1 + \sum_{(\vd, \vmu) \in \Eff(\Gamma)}Z^{\hY, \hD}_{\vd, \vmu}(g_s) Q^{\vd}P_{\vmu}.
$$
Geometrically, the coefficient $Z^{\hY, \hD}_{\vd, \vmu}(g_s)$ is the generating function of degree-$(\vd, \vmu)$ formal relative Gromov-Witten invariants of $(\hY, \hD)$ that account for stable maps with possibly disconnected domains (see \cite[Section 4]{LLLZ09}). 

Let $(\vd, \vmu) \in \Eff(\Gamma)$. We set
$$
    T_{\vd} := \{ \vlambda: E^o(\Gamma) \to \cP \mid \vlambda(e) \vdash \vd(\bar{e}), \vlambda(-e) = \vlambda(e)^t\}.
$$
Here $\bar{e} \in E(\Gamma)$ denotes the unoriented edge corresponding to $e \in E^o(\Gamma)$ and we set $\vd(\bar{e}) = 0$ if $\bar{e} \not \in E_c(\Gamma)$. Moreover, for any trivalent vertex $v \in V^3(\Gamma)$, if $e^1, e^2, e^3 \in E^o(\Gamma)$ are the three edges emanating from $v$ and appearing in counterclockwise order in $\Gamma$, we set
$$
    \vlambda^v := (\vlambda(e^1), \vlambda(e^2), \vlambda(e^3))
$$
which is unique up to cyclic symmetry.

\begin{theorem}[\cite{LLLZ09}] \label{thm:TopVertex}
For any $(\vd, \vmu) \in \Eff(\Gamma)$, we have
\begin{equation}\label{eqn:TopVertex}
    Z^{\hY, \hD}_{\vd, \vmu}(q) = \sum_{\vlambda \in T_{\vd}} \prod_{\bar{e} \in E(\Gamma)} (-1)^{(n^e + 1)\vd(\bar{e})} q^{\frac{\kappa_{\vlambda(e)}n^e}{2}} \prod_{v \in V^3(\Gamma)} \cW_{\vlambda^v}(q) \prod_{i = 1}^s \frac{\chi_{\vlambda(-e_i)}(\mu^i)}{z_{\mu^i}}\sqrt{-1}^{\ell(\mu^i)}(-1)^{|\mu^i|}
\end{equation}
under the change of variables
$$
    q = e^{\sqrt{-1}g_s}.
$$
\end{theorem}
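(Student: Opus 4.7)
The plan is to apply $T'$-equivariant virtual localization to the moduli space $\Mbar_{g,\vmu}(\hY,\hD,\vd)$ of genus-$g$, degree-$\vd$ relative stable maps to $(\hY,\hD)$ with ramification profile $\vmu$ along $\hD$. Since $\hY$ is the formal relative toric Calabi-Yau 3-fold associated to $\Gamma$, its $T'$-fixed points are isolated and its $T'$-invariant curves are isomorphic to either $\bC$ or $\bP^1$, so the components of the $T'$-fixed locus in the moduli space are indexed by decorated graphs covering $\Gamma$. A $T'$-fixed relative stable map either contracts each irreducible component to a $T'$-fixed point of $\hY$ or maps it as a multiple cover of a $T'$-invariant line; recording the degree and ramification data along every oriented edge $e\in E^o(\Gamma)$ assigns a partition $\vlambda(e)\vdash \vd(\bar{e})$ to $e$, and duality of tangent weights across each node forces $\vlambda(-e)=\vlambda(e)^t$. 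This identifies the set of components of the fixed locus precisely with $T_{\vd}$.

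The next step is to factorize the localization contribution of each fixed locus $\vlambda\in T_{\vd}$ into pieces indexed by the trivalent vertices, compact edges, and univalent vertices of $\Gamma$, using a gluing/degeneration argument for these formal relative invariants. The \emph{edge contribution} at an edge $\bar{e}\in E_c(\Gamma)$ reduces to a Hodge integral for a multiple cover of $\bP^1$ with normal bundle $\cO(n^e-1)\oplus \cO(n^{-e}-1)$; a direct calculation (originally due to Faber--Pandharipande) yields the sign $(-1)^{(n^e+1)\vd(\bar{e})}$ together with the framing factor $q^{\kappa_{\vlambda(e)}n^e/2}$. The \emph{relative contribution} at a univalent vertex $v_i$ is a formal relative invariant of an affine chart that reduces via symmetric-group representation theory to the character ratio $\chi_{\vlambda(-e_i)}(\mu^i)/z_{\mu^i}$, and tracking the Calabi-Yau equivariant normalization produces the sign $\sqrt{-1}^{\ell(\mu^i)}(-1)^{|\mu^i|}$.

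The decisive step is identifying the \emph{vertex contribution} at each trivalent vertex $v\in V^3(\Gamma)$ with the three-point function $\cW_{\vlambda^v}(q)$ of Definition \ref{def:ThreePoint}. This is a three-partition Hodge integral and constitutes the technical heart of the topological vertex program; I would invoke the Mari\~no--Vafa formula and its three-partition extension, which express such Hodge integrals as sums of Schur and skew Schur functions evaluated at $q^{\lambda+\rho}$, matching the combinatorial definition of $\cW$. The Mari\~no--Vafa formula itself can be proved by combining localization on $\Mbar_{g,n}(\bP^1,\mu)$ with infinite-wedge/vertex-operator identities, or equivalently through cut-and-join equations on both sides. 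Once the three local contributions are in place, multiplying them over all vertices and edges and summing over $\vlambda\in T_{\vd}$ produces the right-hand side of \eqref{eqn:TopVertex}, while the exponential relation $Z^{\hY,\hD}=\exp(F^{\hY,\hD})$ ensures that $Z^{\hY,\hD}_{\vd,\vmu}$ correctly counts the possibly disconnected contributions. The main obstacle is precisely the three-partition Hodge integral evaluation: this is the deepest and most delicate ingredient, requiring either the full force of the Mari\~no--Vafa/three-partition formula or an independent combinatorial argument matching the two sides via their cut-and-join or bilinear identities.
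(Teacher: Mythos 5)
Your outline reproduces the architecture of the topological vertex program (localization over fixed loci indexed by $T_{\vd}$, factorization into edge, relative, and trivalent-vertex contributions, and evaluation of the three-partition vertex), which is indeed the route taken in \cite{LLLZ09} that the paper cites. But the paper does not re-run this machinery: it reduces Theorem \ref{thm:TopVertex} to \cite[Proposition 7.4, Corollary 7.6]{LLLZ09} together with the identification of the Gromov--Witten vertex $\tC_{\vlambda^v}$ with the combinatorial $\cW_{\vlambda^v}$, and the substance of its own argument is Appendix \ref{appdx:TopVertex}, which recomputes the signs. This is where your proposal has a genuine gap. You assert that ``a direct calculation yields the sign $(-1)^{(n^e+1)\vd(\bar{e})}$'' and that ``tracking the Calabi--Yau equivariant normalization produces the sign $\sqrt{-1}^{\ell(\mu^i)}(-1)^{|\mu^i|}$,'' but these are precisely the points at which the published formula \cite[Proposition 7.4]{LLLZ09} is off: the appendix shows that a careful propagation of the powers of $\sqrt{-1}$ through (7-9)--(7-13) and the double Hurwitz identities (2-8)--(2-9) produces an extra factor $(-1)^{|\mu^v|+\ell(\mu^v)}$ at each univalent vertex and flips the sign in the exponent of the framing factor $q^{\kappa_{\rho^e}n^e/2}$ relative to the published statement. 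A proof that waves at these signs cannot establish \eqref{eqn:TopVertex} as stated, since the uncorrected version of the same calculation yields a different formula; the sign bookkeeping is not a routine afterthought here but the actual content to be supplied.

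A second, smaller issue is your treatment of the vertex evaluation. The identification of the trivalent-vertex contribution with $\cW_{\vlambda^v}(q)$ is not a consequence of the Mari\~no--Vafa formula plus a formal ``three-partition extension'' provable by cut-and-join on both sides: \cite{LLLZ09} establishes the framed three-partition Hodge integral formula in terms of their combinatorial expression and verifies the match with $\cW$ only in special cases in Section 8, and the identification in full generality rests on the Gromov--Witten/Donaldson--Thomas correspondence for toric 3-folds of \cite{MOOP11}. You correctly flag this as the main obstacle, but the routes you propose for overcoming it are not the ones that actually close it in the literature, so as written this step remains an appeal to a theorem you have not located precisely.
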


Here and throughout the paper, we fix a choice of $\sqrt{-1}$. We further explain the notations in \eqref{eqn:TopVertex}. For each $\bar{e} \in E(\Gamma)$, $e \in E^o(\Gamma)$ is a choice of an orientation of $\bar{e}$ and the choice does not affect \eqref{eqn:TopVertex} since
$$
    n^{-e} = -n^e, \qquad \kappa_{\vlambda(-e)} = \kappa_{\vlambda(e)^t} = -\kappa_{\vlambda(e)}.
$$
For each $v \in V^3(\Gamma)$, $\cW_{\vlambda^v}(q)$ is the function defined in Definition \ref{def:ThreePoint} associated to the triple $\vlambda^v \in \cP^3$. Finally, for any $\lambda, \mu \in \cP$ with $|\lambda| = |\mu| = d$, $\chi_\lambda$ denotes the irreducible character of $S_d$ indexed by $\lambda$ and $\chi_\lambda(\mu) \in \bZ$ is the value on the conjugacy class indexed by $\mu$. Recall that $e_i$ is the edge $\bar{e}_i$ oriented to terminate at the univalent vertex $v_i \in V_1(\Gamma)$.

Theorem \ref{thm:TopVertex} is obtained from \cite[Proposition 7.4, Corollary 7.6]{LLLZ09} by using the combinatorial expression $\cW_{\vlambda^v}$ to substitute the expression $\tC_{\vlambda^v}$ defined in \cite[Section 6.4]{LLLZ09} via Gromov-Witten invariants. The substitution is valid by results in \cite[Section 8]{LLLZ09} and \cite{MOOP11}. We note that there are sign discrepancies between \eqref{eqn:TopVertex} and \cite[Proposition 7.4]{LLLZ09}, which we address in detail in Appendix \ref{appdx:TopVertex}.

\section{Open BPS invariants, integrality, and finiteness}\label{sect:OpenBPS}
In this section, we define the open BPS invariants of $(X,L,\vf)$ via resummation of the open Gromov-Witten invariants and prove their integrality and finiteness properties (Theorem \ref{thm:OpenBPS}).

\subsection{Definitions and statements}
Given effective class $(\beta, \vmu) \in \Eff(X,L, \vf)$, we consider the following resummation formula of Labastida-Mari\~no-Ooguri-Vafa (LMOV) \cite{OV00,LM00,LMV00,MV02}:
\begin{equation}\label{eqn:LMOVResumGs}
    \begin{aligned}
        F^{X, L, \vf}_{\beta, \vmu}(g_s) & = \sum_{g \in \bZ_{\ge 0}} N^{X, L, \vf}_{g, \beta, \vmu} g_s^{2g-2+\ell(\vmu)}\\
        &= \frac{1}{\prod_{i = 1}^s z_{\mu^i}} \sum_{k \mid \beta, \vmu}  \sum_{g \in \bZ_{\ge 0}} (-1)^{\ell(\vmu)+g}k^{\ell(\vmu)-1}n^{X, L, \vf}_{g,\frac{\beta}{k}, \frac{\vmu}{k}}\left(2\sin \frac{kg_s}{2} \right)^{2g-2} \prod_{i=1}^s \prod_{j=1}^{\ell(\mu^i)} 2\sin \frac{\mu^i_jg_s}{2}\\
        &= \sum_{k \mid \beta, \vmu} \sum_{g \in \bZ_{\ge 0}} \frac{(-1)^{\ell(\vmu)+g}n^{X, L, \vf}_{g,\frac{\beta}{k}, \frac{\vmu}{k}}}{k \prod_{i = 1}^s z_{\frac{\mu^i}{k}}} \left(2\sin \frac{kg_s}{2} \right)^{2g-2} \prod_{i=1}^s \prod_{j=1}^{\ell(\mu^i)} 2\sin \frac{\mu^i_jg_s}{2}.
    \end{aligned}
\end{equation}
Here for $k \in \bZ_{\ge 1}$, we say that $k \mid \beta$ if $\frac{\beta}{k} \in H_2(X,L;\bZ)$, and $k \mid \vmu$ if $k \mid \mu^i$ for each $i$ (or equivalently $\frac{\vmu}{k}$ exists). We refer to the coefficient
$$
    n^{X, L, \vf}_{g, \beta, \vmu} \in \bQ
$$
determined by \eqref{eqn:LMOVResumGs} as the genus-$g$, degree-$(\beta, \vmu)$ \emph{open BPS invariant} of $(X, L, \vf)$. Our main results of this section are the following properties of these invariants.

\begin{theorem}\label{thm:OpenBPS}
For any $(\beta, \vmu) \in \Eff(X, L, \vf)$, we have that
$$
    n^{X, L, \vf}_{g, \beta, \vmu} \in \bZ
$$
for all $g \in \bZ_{\ge 0}$ and is zero for $g \gg 0$.
\end{theorem}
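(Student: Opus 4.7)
The plan is to parallel the strategy of Peng \cite{Peng07} and Konishi \cite{Konishi06a, Konishi06b} in the closed sector, replacing their input (the topological vertex for closed Gromov--Witten invariants) with the relative/open version afforded by Theorem \ref{thm:TopVertex} together with \eqref{eqn:OpenRelFunction}. The first step is to pass from the basis $\{P_{\vmu}\}$ to the power-sum basis $\{p_{\vmu} = \prod_i p_{\mu^i}\}$ via $P_{\vmu} = \sum_{\vnu} \prod_i \frac{\chi_{\nu^i}(\mu^i)}{z_{\mu^i}} p_{\vnu}$; this converts \eqref{eqn:TopVertex} into an expression whose coefficients are $\bZ$-linear combinations of the three-point functions $\cW_{\vlambda^v}(q)$, since the character-weighted sum $\sum_{\lambda} \chi_{\lambda}(\mu) s_{\lambda}(x) = p_{\mu}(x)$ trades the problematic $z_{\mu^i}$ denominators for Jacobi--Trudi polynomial identities. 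The upshot is that $F^{X,L,\vf}(g_s, Q, p)$ has the form $\sum_{\beta, \vmu} \frac{1}{z_{\vmu}} \widetilde{F}_{\beta, \vmu}(q)\, Q^{\beta} p_{\vmu}$ where, after clearing the explicit pole denominators of $\cW$, the $\widetilde{F}_{\beta, \vmu}$ are controlled Laurent expressions in $q^{\pm 1/2}$.

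Next, I would recast the LMOV resummation \eqref{eqn:LMOVResumGs} in this $p$-basis. Using $\prod_{i,j} 2\sin(\mu^i_j g_s/2) = \sqrt{-1}^{-\ell(\vmu)} \prod_{i,j} (q^{\mu^i_j/2} - q^{-\mu^i_j/2})$ and rewriting the sum over $k \mid (\beta, \vmu)$ as an Adams operation, the formula becomes a plethystic-exponential identity
\begin{equation*}
    F^{X,L,\vf}(g_s, Q, p) \;=\; \sum_{k \ge 1} \frac{1}{k}\, \Psi_k \!\left( \widehat{F}^{X,L,\vf}(g_s, Q, p) \right),
\end{equation*}
where $\Psi_k$ acts by $q \mapsto q^k$, $Q^\beta \mapsto Q^{k\beta}$, $p_{\vmu} \mapsto p_{k\vmu}$, and the coefficients of $\widehat{F}^{X,L,\vf}$ in the $p$-basis encode the $n^{X,L,\vf}_{g,\beta,\vmu}$. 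Möbius inversion then gives $\widehat{F}^{X,L,\vf} = \sum_k \frac{\mu(k)}{k}\, \Psi_k( F^{X,L,\vf})$, so each open BPS invariant is an alternating sum over divisors $k \mid (\beta, \vmu)$ of topological-vertex data evaluated after the substitution $q \mapsto q^k$.

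The third and hardest step is integrality of the resulting expression. The denominators to clear are of three kinds: the factors $\frac{1}{z_{\mu^i}}$ (absorbed in the first step), the $\frac{1}{k}$'s coming from Möbius inversion, and the powers of $(q^{k/2} - q^{-k/2})$ appearing in the LMOV kernel. Cancellation of the $\frac{1}{k}$ follows from a Gauss-type congruence: when $\Psi_k$ is applied to a symmetric function whose $p$-basis coefficients lie in $\bZ[q^{\pm 1/2}]$, the resulting contribution to the $(k\beta, k\vmu)$-coefficient is divisible by $k$ modulo the proper divisors of $k$. This mirrors exactly the mechanism of \cite{Peng07,Konishi06b} for the closed Gopakumar--Vafa conjecture; the open case requires additionally tracking that the legs carrying partitions $\mu^i$ behave compatibly under $\Psi_k$, which they do because the character identity $p_{k\mu} = \sum_{\lambda} \chi_{\lambda}(\mu) s_{\lambda}[\mathrm{Adams}_k]$ is itself divisible by $k$ off the diagonal. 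The remaining pole cancellation, namely that $(q^{1/2} - q^{-1/2})^2 \cdot \widehat{F}^{X,L,\vf}_{\beta,\vmu}(q)$ is a genuine Laurent polynomial in $q^{\pm 1/2}$ with integer coefficients, reduces via the hook-content formula to analyzing the orders of the poles of $\cW_{\vlambda^v}(q)$ at roots of unity $q = e^{2\pi\sqrt{-1}/k}$, and I would adapt the order-of-pole estimates of \cite{Peng07, Konishi06a} verbatim, noting that the univalent leg contributions $\chi_{\vlambda(-e_i)}(\mu^i)$ are polynomial in the ring $\bZ[q^{\pm 1/2}]$ and thus do not affect the root-of-unity analysis.

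Finiteness in $g$ then follows once integrality is in hand: the set $T_{\vd}$ is finite, each $\cW_{\vlambda^v}(q)$ is a fixed rational function in $q^{1/2}$, and the Möbius sum is finite because only finitely many $k$ divide a fixed pair $(\beta, \vmu)$. Hence $(q^{1/2} - q^{-1/2})^2 \widehat{F}^{X,L,\vf}_{\beta,\vmu}(q)$ is a Laurent polynomial of bounded degree, and expanding in $g_s$ via $q = e^{\sqrt{-1}g_s}$ shows that $n^{X,L,\vf}_{g,\beta,\vmu}$ vanishes for $g$ larger than an explicit bound controlled by the topological vertex input. I expect the main obstacle to be the combinatorial root-of-unity pole analysis, since the presence of legs with general winding profiles $\vmu$ introduces terms not handled by the closed-sector arguments and will require a careful bookkeeping of which factors in the topological vertex genuinely develop poles at $q^k = 1$.
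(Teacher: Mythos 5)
Your overall strategy (M\"obius inversion of the LMOV formula against the topological vertex, then clearing denominators \`a la Peng--Konishi) is the same as the paper's, but two of your load-bearing steps have genuine gaps.

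First, your finiteness argument does not work as stated. You claim that because $T_{\vd}$ is finite and each $\cW_{\vlambda^v}(q)$ is ``a fixed rational function,'' the quantity $(q^{1/2}-q^{-1/2})^2\widehat{F}_{\beta,\vmu}$ is a Laurent polynomial of bounded degree. A finite sum of rational functions is still only a rational function: the $\cW$'s carry genuine denominators (monic polynomials in $t=q+q^{-1}-2$), and the LMOV kernel contributes further denominators $[k]^{2}$ and $\prod_{i,j}[\mu^i_j]$. Showing that all of these cancel, i.e.\ that $t\,G^{X,L,\vf}_{\beta,\vmu}(t)\in\bQ[t]$, is precisely the paper's Lemma \ref{lem:GFinite} and is the hardest part of the proof: it requires constructing an auxiliary trivalent FTCY graph $\Gamma'$ (capping each univalent vertex so that the boundary data $\vmu$ is absorbed into a closed-string vertex via $\cW_{(\lambda,\emptyset,\emptyset)}=s_\lambda(q^\rho)=\sum_\mu\chi_\lambda(\mu)/(z_\mu[\mu])$) and then invoking Konishi's labeled-graph amplitude decomposition and \cite[Proposition 6.14]{Konishi06b}. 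Nothing in your proposal substitutes for this, and it cannot be deduced ``once integrality is in hand'': the paper's Lemma \ref{lem:GIntegral} only places $G$ in $\cL[t]$, a ring of rational functions with monic integral denominator, and the two lemmas are logically independent halves whose conjunction gives $tG\in\bZ[t]$.

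Second, on the integrality side your denominator-clearing is vague exactly where the open legs matter. The paper cancels the $1/k$, $1/k'$ and $1/z_{\nu^i}$ factors by an explicit divisibility $k^{\ell(\vmu)}(k'!)^s \mid z_{\vmu}/\prod z_{\nu^i}^{k'\va(\vdelta,\vnu)}$ together with the multinomial divisibility of \cite[Lemma A.2]{Konishi06b}, after first stratifying the exponential-to-logarithm expansion by $\gcd(\va)$; it then obtains membership in $\cL[t]$ by pairing each $\vlambda$ with its conjugate $\vlambda^t$ and using the $q\leftrightarrow q^{-1}$ symmetry of $\cW$ plus \cite[Lemma 6.2]{Konishi06a}. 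Your ``Gauss-type congruence'' and the claim that the character identity is ``divisible by $k$ off the diagonal'' gesture at this mechanism but are not arguments; in particular, the compatibility of the boundary factors $\chi_{\vlambda(-e_i)}(\nu^i)[k\nu^i]$ with the conjugation symmetry (the sign $(-1)^{|\vlambda^t(e_i)|}$) is exactly the new open-string bookkeeping that must be carried out and that you defer. The skeleton is right, but both essential steps are missing.
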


To prove Theorem \ref{thm:OpenBPS}, we first rewrite the resummation formula \eqref{eqn:LMOVResumGs} using the change of variables $q = e^{\sqrt{-1}g_s}$. For $a \in \bZ_{\ge 1}$, we write
$$
    [a]:= q^{\frac{a}{2}} - q^{-\frac{a}{2}},
$$
and for any partition $\lambda \in \cP$, we write
$$
    [\lambda]:= \prod_{j=1}^{\ell(\lambda)} [\lambda_j].
$$
Note that
$$
    2\sin \frac{ag_s}{2} = \frac{[a]}{\sqrt{-1}}.
$$
Then \eqref{eqn:LMOVResumGs} gives
\begin{equation}\label{eqn:LMOVResumQ}
    F^{X, L, \vf}_{\beta, \vmu}(q) = \sum_{k \mid \beta, \vmu} \sum_{g \in \bZ_{\ge 0}} \frac{-\sqrt{-1}^{\ell(\frac{\vmu}{k})}n^{X, L, \vf}_{g,\frac{\beta}{k}, \frac{\vmu}{k}}}{k \prod_{i = 1}^s z_{\frac{\mu^i}{k}}} [k]^{2g-2} \prod_{i=1}^s \prod_{j=1}^{\ell(\frac{\mu^i}{k})} [k \cdot \frac{\mu^i_j}{k}].
\end{equation}
We define an auxiliary function
$$
    H^{X, L, \vf}_{\beta, \vmu}(q) := \sum_{g \in \bZ_{\ge 0}} \frac{-\sqrt{-1}^{\ell(\vmu)}n^{X, L, \vf}_{g, \beta, \vmu}}{ \prod_{i = 1}^s z_{\mu^i}} [1]^{2g-2} \prod_{i=1}^s [\mu^i].
$$
Then \eqref{eqn:LMOVResumQ} can be rewritten as
$$
    F^{X, L, \vf}_{\beta, \vmu}(q) = \sum_{k \mid \beta, \vmu} \frac{1}{k} H^{X, L, \vf}_{\frac{\beta}{k}, \frac{\vmu}{k}}(q^k)
$$
which is equivalent to
\begin{equation}\label{eqn:HtoF}
    H^{X, L, \vf}_{\beta, \vmu}(q) = \sum_{k \mid \beta, \vmu} \frac{\mu(k)}{k} F^{X, L, \vf}_{\frac{\beta}{k}, \frac{\vmu}{k}}(q^k).
\end{equation}
Here, $\mu(k)$ is the \emph{M\"obius function} and satisfies that
$$
    \sum_{k' \mid k} \mu(k') = \begin{cases}
        1 & \text{if } k=1,\\
        0 & \text{for } k \in \bZ_{\ge 2}.
    \end{cases}
$$

Moreover, we set up a new formal variable
$$
    t := [1]^2 = q + q^{-1} -2
$$
and define a generating function
\begin{equation}\label{eqn:GDef}
    G^{X, L, \vf}_{\beta, \vmu}(t) := \sum_{g \in \bZ_{\ge 0}} -n^{X, L, \vf}_{g, \beta, \vmu} t^{g-1}.
\end{equation}
The definitions imply that
\begin{equation}\label{eqn:GtoH}
    G^{X, L, \vf}_{\beta, \vmu}(t) = H^{X, L, \vf}_{\beta, \vmu}(q) \prod_{i=1}^s \frac{z_{\mu^i}}{\sqrt{-1}^{\ell(\mu^i)}[\mu^i]}. 
\end{equation}

The proof of Theorem \ref{thm:OpenBPS} is based on the following two technical lemmas, which are the counterparts of \cite[Propositions 3.2, 3.3]{Konishi06b} respectively.

\begin{lemma}\label{lem:GIntegral}
For any $(\beta, \vmu) \in \Eff(X, L, \vf)$, we have that
$$
    G^{X, L, \vf}_{\beta, \vmu}(t) \in \cL[t].
$$
\end{lemma}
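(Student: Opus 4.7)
The plan is to prove Lemma \ref{lem:GIntegral} by tracing the topological vertex formula \eqref{eqn:TopVertex} through the chain of transformations \eqref{eqn:OpenRelFunction}, \eqref{eqn:HtoF}, \eqref{eqn:GtoH}, following the strategy pioneered by Peng \cite{Peng07} and Konishi \cite{Konishi06b} in the closed sector and adapting it to account for the brane data. The starting point is to unfold $F^{X,L,\vf}_{\beta,\vmu}(q)$ as a signed sum over $\vd \in \Eff(\Gamma)$ with $\pi(\vd) = \beta$ and then apply the topological vertex to each connected generating function obtained from $\log Z^{\hY,\hD}$. Substituting into \eqref{eqn:HtoF} and multiplying by $\prod_i z_{\mu^i}/(\sqrt{-1}^{\ell(\mu^i)}[\mu^i])$ gives an explicit Laurent expression for $G^{X,L,\vf}_{\beta,\vmu}$ in $q^{\pm 1/2}$, and the task is to show that it descends to a polynomial in $t = q + q^{-1} - 2$ with coefficients in $\cL$.

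Next I would isolate the building blocks of \eqref{eqn:TopVertex} and record their integrality. The three-point function $\cW_{\lambda^1,\lambda^2,\lambda^3}(q)$, after clearing the appropriate power of $[1]$, lies in $\bZ[q^{\pm 1/2}]$; the framing factors $q^{\kappa_{\vlambda(e)} n^e/2}$ are integral powers of $q^{1/2}$ since $\kappa_\lambda$ is always even; and the character values $\chi_{\vlambda(-e_i)}(\mu^i)$ are integers. The only source of denominators is $z_{\mu^i}$, and the remaining work is to show that these are absorbed by the character-weighted sums over edge assignments $\vlambda$.

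The main obstacle is the two cancellations needed to reach $\cL[t]$. First is the \emph{denominator cancellation}: the $z_{\mu^i}$'s should be killed by the pattern of $\chi_{\vlambda(-e_i)}(\mu^i)$'s coming from the univalent-vertex edges, together with the symmetries of the Möbius-weighted sum over divisors $k$ of $(\beta,\vmu)$. This is where one must combine contributions across different $k$ in \eqref{eqn:HtoF} and exploit standard symmetric-function identities (of the kind used by Liu--Liu--Zhou in their work on Mari\~no--Vafa type formulas) to rewrite the character-weighted sums as expressions with no denominator. Second is the \emph{$q \leftrightarrow q^{-1}$ invariance and vanishing at $q = 1$}: expressing $H^{X,L,\vf}_{\beta,\vmu}(q) \prod_i z_{\mu^i}/(\sqrt{-1}^{\ell(\mu^i)}[\mu^i])$ as a polynomial in $t$ amounts to proving that it is invariant under $q \to q^{-1}$ and that its pole order at $q = 1$ (equivalently $t = 0$) is at most $1$, matching the $t^{g-1}$ grading in \eqref{eqn:GDef}. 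The symmetry follows from $\cW_{\vlambda^v}(q) = \cW_{(\vlambda^v)^t}(q^{-1})$ combined with the sign changes induced by $\kappa_\lambda$, $\ell(\mu^i)$, and $|\mu^i|$ under transposition. The pole-order estimate uses that at $q = 1$ the topological vertex $\cW_{\vlambda^v}(1)$ reduces to dimensions of irreducible $S_d$-representations and the whole expression degenerates to a finite combinatorial count.

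Finally, the finiteness of nonzero BPS invariants (polynomiality in $t$, not merely Laurent) will follow from an explicit bound on the $q^{1/2}$-degree of each $\cW_{\vlambda^v}(q)$ in terms of $|\vlambda^v|$, which is uniformly bounded once $\vd$ (equivalently $(\beta,\vmu)$) is fixed, since the sum in \eqref{eqn:TopVertex} is over the finite set $T_{\vd}$. I expect the denominator-cancellation step to be the technically heaviest; the $q \leftrightarrow q^{-1}$ symmetry should be the cleanest part of the argument, and the finiteness essentially a bookkeeping exercise once the polynomiality in $t$ is secured.
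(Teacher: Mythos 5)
Your overall architecture matches the paper's: unfold $G^{X,L,\vf}_{\beta,\vmu}$ through \eqref{eqn:GtoH}, \eqref{eqn:HtoF}, \eqref{eqn:OpenRelFunction}, insert the topological vertex \eqref{eqn:TopVertex}, and obtain the $q\leftrightarrow q^{-1}$ symmetry by pairing each $\vlambda$ with its transpose via $\cW_{(\vlambda^t)^v}(q^k)=(-1)^{|\vlambda^v|}\cW_{\vlambda^v}(q^{-k})$; that pairing is indeed how the paper lands in $\cL[t]$. But there is a genuine gap rooted in a misreading of the target ring. $\cL[t]$ is not the ring of polynomials in $t$: it is the ring of fractions $a(t)/b(t)$ with $a(t)\in\bZ[t]$ and $b(t)\in\bZ_0[t]$ monic. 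So Lemma \ref{lem:GIntegral} requires no control of the pole at $q=1$, and monic denominators are allowed to survive --- which is essential, because your claim that $\cW_{\vlambda^v}(q)$ becomes a Laurent polynomial ``after clearing the appropriate power of $[1]$'' is false: its denominator is a product of $[h]$'s over hook lengths, whose squares lie in $\bZ_0[t]$ but are not powers of $[1]$. The statement you fold into this lemma --- pole order at most one at $t=0$, i.e.\ $tG\in\bQ[t]$ --- is the separate Lemma \ref{lem:GFinite}, and your sketch for it (evaluating the vertex at $q=1$ to get dimensions of irreducible representations) would not work: individual terms have poles at $q=1$ of a priori unbounded order, and bounding the order after cancellation is the hard content, handled in the paper by passing to an auxiliary trivalent graph and invoking Konishi's graph-amplitude results. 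Your plan thus simultaneously tries to prove too much here and omits the actual mechanism for the harder half.

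The second problem is the denominator cancellation. The $z_{\mu^i}$'s, the $1/k$ from the M\"obius sum, and the $1/|\va|$ arising from expanding $\log Z^{\hY,\hD}$ (which your plan does not mention at all) are not absorbed by character orthogonality or symmetric-function identities of Mari\~no--Vafa type; the paper kills them by elementary divisibility: $z_{k\nu}=k^{\ell(\nu)}z_\nu$, the extra factor $(k'!)^s$ in $|\Aut(\mu^i)|$ coming from permuting the $k'$ repeated copies of each $\nu^i$, and the multinomial integrality $\frac{(k'|\va|)!}{|\va|\prod(k'\va(\vdelta,\vnu))!}\in\bZ$ from \cite[Lemma A.2]{Konishi06b}. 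Without these specific inputs the rational prefactors in your Laurent expression do not cancel and the argument does not close, even granting the transposition symmetry.
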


Here,
$$
    \cL[t] := \left\{ \frac{a(t)}{b(t)} \biggm| a(t) \in \bZ[t], b(t) \in \bZ_0[t], b(t) \neq 0  \right\}
$$
where $\bZ_0[t] \subset \bZ[t]$ is the subring of monic polynomials in $t$.

\begin{lemma}\label{lem:GFinite}
For any $(\beta, \vmu) \in \Eff(X, L, \vf)$, we have that
$$
    tG^{X, L, \vf}_{\beta, \vmu}(t) \in \bQ[t]. 
$$
\end{lemma}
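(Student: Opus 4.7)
The plan is to show that $tG^{X,L,\vf}_{\beta,\vmu}(t)$, a priori a formal Laurent series in $t$, is in fact a polynomial with rational coefficients. This amounts to a finiteness statement: $n^{X,L,\vf}_{g,\beta,\vmu} = 0$ for $g \gg 0$. I would adapt the strategy of Konishi \cite{Konishi06b} Proposition 3.3, which handled the analogous closed-sector assertion, to the relative/open setting.

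First, combining \eqref{eqn:GtoH} and \eqref{eqn:HtoF}, I would rewrite
\begin{equation*}
    tG^{X,L,\vf}_{\beta,\vmu}(t) = [1]^2 \cdot \left( \sum_{k \mid \beta, \vmu} \frac{\mu(k)}{k} F^{X,L,\vf}_{\frac{\beta}{k},\frac{\vmu}{k}}(q^k) \right) \cdot \prod_{i=1}^s \frac{z_{\mu^i}}{\sqrt{-1}^{\ell(\mu^i)}[\mu^i]},
\end{equation*}
where $\mu(k)$ is the M\"obius function. Using \eqref{eqn:OpenRelFunction} together with Theorem \ref{thm:TopVertex}, each $F^{X,L,\vf}_{\beta/k,\vmu/k}(q^k)$ is expressed explicitly as a finite sum over partition assignments $\vlambda$ of products of $\cW_{\vlambda^v}(q^k)$, signs, and characters. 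By the hook-length identity $s_\lambda(q^\rho) = q^{\kappa_\lambda/4}/\prod_{x \in \lambda}[h(x)]$ and the corresponding skew version, the three-point functions $\cW_{\lambda^1,\lambda^2,\lambda^3}(q)$ are rational functions of $q^{\frac{1}{2}}$ whose only poles occur at roots of unity, with explicit hook-length denominators.

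Next, I would perform a pole-order analysis of the right-hand side above. For regularity at $q=1$: the $g_s$-expansion $F^{X,L,\vf}_{\beta,\vmu}(g_s) = \sum_g N^{X,L,\vf}_{g,\beta,\vmu}g_s^{2g-2+\ell(\vmu)}$ starts at $g_s^{\ell(\vmu)-2}$; the factor $\prod_i[\mu^i]^{-1} \sim \sqrt{-1}^{-\ell(\vmu)}g_s^{-\ell(\vmu)}\prod_{i,j}\mu^i_j^{-1}$ contributes a pole of order $\ell(\vmu)$, and the prefactor $[1]^2 \sim -g_s^2$ accounts for two more powers, so the total leading order in $g_s$ is nonnegative and $tG$ is regular at $q = 1$. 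For regularity at any primitive $n$-th root of unity $\zeta_n$ with $n \ge 2$: the crucial mechanism is that the singularity produced by quantum integers $[n\cdot\text{(hook length)}]$ in the $k = n$ summand of $F^{X,L,\vf}_{\beta/n,\vmu/n}(q^n)$ is exactly cancelled by the poles produced at $\zeta_n$ from the $k = 1, \dots, n-1$ summands, weighted by $\mu(k)/k$. This is the hallmark pole-cancellation feature of the LMOV/GV resummation.

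The main obstacle is verifying this last cancellation at $\zeta_n \ne 1$ in the open setting. Relative to Konishi's closed-sector argument, the new ingredients are the univalent-vertex factors $\chi_{\vlambda(-e_i)}(\mu^i)/z_{\mu^i}$ and the winding multiplicities $\mu^i_j$, whose interaction with the Möbius-weighted sum must be reconciled carefully. The key combinatorial input is the behaviour of $\cW_{\vlambda^v}(q)$ under $q \mapsto q^n$ combined with the plethystic/transfer-matrix identities on Schur functions at $q^{\rho}$; pushing these through the $\log$ to get $F$ from $Z$, as well as through \eqref{eqn:OpenRelFunction}, requires keeping careful track of how compositions of partitions are distributed among $(\beta/k, \vmu/k)$ components. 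Once this cancellation is established, the right-hand side is a Laurent polynomial in $q^{\frac{1}{2}}$ that is symmetric under $q \mapsto q^{-1}$, hence a polynomial in $t = q + q^{-1} - 2$; combined with Lemma \ref{lem:GIntegral}, this forces $tG^{X,L,\vf}_{\beta,\vmu}(t) \in \bQ[t]$.
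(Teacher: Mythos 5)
Your overall frame is right---pass to $H^{X,L,\vf}_{\beta,\vmu}$ by M\"obius inversion, feed in the topological vertex, and reduce to showing that the resulting rational function of $q^{\frac{1}{2}}$ has no poles away from $q=1$ (and $q=0,\infty$)---and your leading-order check at $q=1$ is fine. But the step you yourself flag as ``the main obstacle,'' the cancellation of poles at primitive roots of unity $\zeta_n$ with $n\ge 2$, is not an obstacle to be reconciled later: it \emph{is} the lemma. Nothing in the proposal actually establishes it. Moreover, the mechanism as you describe it (``the $k=n$ summand is cancelled by the $k=1,\dots,n-1$ summands'') is not quite the right picture: the sum in \eqref{eqn:HtoF} runs only over the finitely many $k$ with $k\mid\beta,\vmu$, so for most $n$ there is no ``$k=n$ summand'' at all, while every summand $F^{X,L,\vf}_{\beta/k,\vmu/k}(q^k)$ still acquires poles at $\zeta_n$ from hook-length factors $[ka]$ with $n\mid ka$. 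The cancellation therefore has to be organized differently, and one must also push it through the logarithm relating the disconnected vertex sum $Z^{\hY,\hD}_{\vd,\vmu}$ to the connected $F^{\hY,\hD}_{\vd,\vmu}$ and through the sum over $\vd$ with $\pi(\vd)=\beta/k$; none of this bookkeeping is carried out.

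The paper sidesteps a direct pole analysis by a reduction you would need some substitute for. It introduces an auxiliary trivalent FTCY graph $\Gamma'$ obtained by completing each univalent vertex $v_i$ to a trivalent one, and uses the Frobenius character formula $\cW_{(\lambda,\emptyset,\emptyset)}=s_\lambda(q^\rho)=\sum_{\mu\vdash|\lambda|}\chi_\lambda(\mu)/(z_\mu[\mu])$ to absorb exactly the univalent-vertex factors $\chi_{\vlambda(-e_i)}(\mu^i)/(z_{\mu^i}[\mu^i])$ that you identify as the new open-string ingredient into an honest closed-string three-point function. After the reindexing $k=k'\cdot(k/k')$ that extracts $\gcd(\vd,\vmu/k)$, the M\"obius-weighted sum is then recognized, via Konishi's decomposition of $Z^{\hY'}_{\vd}$ and $F^{\hY'}_{\vd}$ into amplitudes $\cH(W)(q)$ over labeled graphs $W\in\Comb^{\bullet}_{\Gamma'}$, $\Comb^{\circ}_{\Gamma'}$, as a sum of expressions to which \cite[Proposition 6.14]{Konishi06b} applies verbatim; that proposition is precisely the root-of-unity pole cancellation you are missing. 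So either invoke this machinery (in which case you must set up the auxiliary graph and verify the Frobenius-formula matching and the refinement of Konishi's Proposition 6.13 keeping track of the $P_{\vmu}$ variables), or supply an independent proof of the cancellation; as written, the proposal assumes the conclusion at the decisive point.
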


The proofs of the lemmas will be given in Sections \ref{sect:ProofIntegral}, \ref{sect:ProofFinite} below respectively.

\begin{proof}[Proof of Theorem \ref{thm:OpenBPS}]
As observed by \cite{Konishi06a,Konishi06b}, Lemmas \ref{lem:GIntegral} and \ref{lem:GFinite} together imply that
$$
    tG^{X, L, \vf}_{\beta, \vmu}(t) \in \bZ[t]
$$
which is equivalent to the theorem.
\end{proof}

\subsection{Proof of Lemma \ref{lem:GIntegral}}\label{sect:ProofIntegral}
Now we prove Lemma \ref{lem:GIntegral}. We start by using \eqref{eqn:GtoH}, \eqref{eqn:HtoF}, \eqref{eqn:OpenRelFunction} to rewrite the generating function $G^{X, L, \vf}_{\beta, \vmu}(t)$ as follows:
\begin{equation}\label{eqn:GRewrite1}
\begin{aligned}
    G^{X, L, \vf}_{\beta, \vmu}(t) & = H^{X, L, \vf}_{\beta, \vmu}(q) \prod_{i=1}^s \frac{z_{\mu^i}}{\sqrt{-1}^{\ell(\mu^i)}[\mu^i]}\\
    & = \prod_{i=1}^s \frac{z_{\mu^i}}{\sqrt{-1}^{\ell(\mu^i)}[\mu^i]}  \sum_{k \mid \beta, \vmu} \frac{\mu(k)}{k} F^{X, L, \vf}_{\frac{\beta}{k}, \frac{\vmu}{k}}(q^k)\\
    & = \prod_{i=1}^s \frac{\sqrt{-1}^{\ell(\mu^i)}z_{\mu^i}}{[\mu^i]} \sum_{k \mid \beta, \vmu} \frac{\mu(k)}{k} (-1)^{|\frac{\vmu}{k}|}\sum_{\vd: \pi(\vd) = \frac{\beta}{k}} F^{\hY, \hD}_{g, \vd, \frac{\vmu}{k}}(q^k).
\end{aligned}
\end{equation}

Now we bring in the topological vertex to continue rewriting \eqref{eqn:GRewrite1}, using techniques in \cite[Section 5]{Konishi06b}. Note that $F^{\hY, \hD}_{g, \vd, \frac{\vmu}{k}}(q^k)$ is the coefficient of $Q^{\vd}P_{\frac{\vmu}{k}}$ in
$$
    \ln \left(Z^{\hY, \hD}(q^k, Q, P)\right).
$$
To extract this coefficient, we introduce some notations. For any effective class $(\vd, \vmu) \in \Eff(\Gamma)$, we denote
$$
    \cD(\vd, \vmu) := \{ (\vdelta, \vnu) \in \Eff(\Gamma) \mid \vdelta \le \vd, \vnu \subseteq \vmu\},
$$
and
$$
    \cA(\vd, \vmu) := \left\{ \va: \cD(\vd, \vmu) \to \bZ_{\ge 0} \biggm| \sum_{(\vdelta, \vnu) \in \cD(\vd, \vmu)} \va(\vdelta, \vnu) \vdelta = \vd, \bigsqcup_{(\vdelta, \vnu) \in \cD(\vd, \vmu)} \vnu^{(\va(\vdelta, \vnu))} = \vmu \ \right\}
$$
which is the set of $\va$ such that
$$
    \prod_{(\vdelta, \vnu) \in \cD(\vd, \vmu)} (Q^{\vdelta}P_{\vnu})^{\va(\vdelta, \vnu)} = Q^{\vd}P_{\vmu}.
$$
For $\va \in \cA(\vd, \vmu)$, we write
$$
    |\va| := \sum_{(\vdelta, \vnu) \in \cD(\vd, \vmu)} \va(\vdelta, \vnu), \qquad \gcd(\va) := \gcd(\{\va(\vdelta, \vnu) \mid (\vdelta, \vnu) \in \cD(\vd, \vmu) \}).
$$
Moreover, we denote
$$
    \cA_1(\vd, \vmu) := \{\va \in \cA(\vd, \vmu) \mid \gcd(\va)=1\}.
$$

Returning to \eqref{eqn:GRewrite1}, for any $\vd$ such that $\pi(\vd) = \frac{\beta}{k}$, we have
\begin{equation}\label{eqn:FRewrite}
\begin{aligned}
    F^{\hY, \hD}_{g, \vd, \frac{\vmu}{k}}(q^k) = & \sum_{\va \in \cA(\vd, \frac{\vmu}{k})} \frac{|\va|!}{\prod_{(\vdelta, \vnu) \in \cD(\vd, \frac{\vmu}{k})} \va(\vdelta, \vnu)!} \frac{(-1)^{|\va|-1}}{|\va|} \prod_{(\vdelta, \vnu) \in \cD(\vd, \frac{\vmu}{k})} \left(Z^{\hY, \hD}_{\vdelta, \vnu}(q^k)\right)^{\va(\vdelta, \vnu)}\\
    = & \sum_{\substack{k' \mid \vd \\ (\frac{\vmu}{k})^{(\frac{1}{k'})} \text{ exists}}} \sum_{\va \in \cA_1\left(\frac{\vd}{k'}, (\frac{\vmu}{k})^{(\frac{1}{k'})}\right)} \frac{(k'|\va|)!}{\displaystyle\prod_{(\vdelta, \vnu) \in \cD\left(\frac{\vd}{k'}, (\frac{\vmu}{k})^{(\frac{1}{k'})}\right)} (k'\va(\vdelta, \vnu))!} \frac{(-1)^{k'|\va|-1}}{k'|\va|} \\
    &  \cdot \prod_{(\vdelta, \vnu) \in \cD\left(\frac{\vd}{k'}, (\frac{\vmu}{k})^{(\frac{1}{k'})}\right)} \left(Z^{\hY, \hD}_{\vdelta, \vnu}(q^k)\right)^{k'\va(\vdelta, \vnu)}.
\end{aligned}
\end{equation}
By Theorem \ref{thm:TopVertex}, for $(\vdelta, \vnu) \in \cD\left(\frac{\vd}{k'}, (\frac{\vmu}{k})^{(\frac{1}{k'})}\right)$ as above,
$$
    Z^{\hY, \hD}_{\vdelta, \vnu}(q^k) = \sum_{\vlambda \in T_{\vdelta}} \prod_{\bar{e} \in E(\Gamma)} (-1)^{(n^e + 1)\vdelta(\bar{e})} q^{\frac{k\kappa_{\vlambda(e)}n^e}{2}} \prod_{v \in V^3(\Gamma)} \cW_{\vlambda^v}(q^k) \prod_{i = 1}^s \frac{\chi_{\vlambda(-e_i)}(\nu^i)}{z_{\nu^i}}\sqrt{-1}^{\ell(\nu^i)}(-1)^{|\nu^i|}.
$$
This combined with \eqref{eqn:GRewrite1} gives
\begin{equation}\label{eqn:GRewrite2}
    \begin{aligned}
        & G^{X, L, \vf}_{\beta, \vmu}(t) = \prod_{i=1}^s \frac{\sqrt{-1}^{\ell(\mu^i)}z_{\mu^i}}{[\mu^i]} \sum_{k \mid \beta, \vmu} \frac{\mu(k)}{k} (-1)^{|\frac{\vmu}{k}|}  \sum_{\vd: \pi(\vd) = \frac{\beta}{k}} \sum_{\substack{k' \mid \vd \\ (\frac{\vmu}{k})^{(\frac{1}{k'})} \text{ exists}}}\\
        \phantom{aaaa} &  \sum_{\va \in \cA_1\left(\frac{\vd}{k'}, (\frac{\vmu}{k})^{(\frac{1}{k'})}\right)} \frac{(k'|\va|)!}{\displaystyle\prod_{(\vdelta, \vnu) \in \cD\left(\frac{\vd}{k'}, (\frac{\vmu}{k})^{(\frac{1}{k'})}\right)} (k'\va(\vdelta, \vnu))!} \frac{(-1)^{k'|\va|-1}}{k'|\va|}  \prod_{(\vdelta, \vnu) \in \cD\left(\frac{\vd}{k'}, (\frac{\vmu}{k})^{(\frac{1}{k'})}\right)}\\
        \phantom{aaaa} & \left(\sum_{\vlambda \in T_{\vdelta}} \prod_{\bar{e} \in E(\Gamma)} (-1)^{(n^e + 1)\vdelta(\bar{e})} q^{\frac{k\kappa_{\vlambda(e)}n^e}{2}} \prod_{v \in V^3(\Gamma)} \cW_{\vlambda^v}(q^k) \prod_{i = 1}^s \frac{\chi_{\vlambda(-e_i)}(\nu^i)}{z_{\nu^i}}\sqrt{-1}^{\ell(\nu^i)}(-1)^{|\nu^i|}\right)^{k'\va(\vdelta, \vnu)}.
    \end{aligned}
\end{equation}
Here, we have by definition that
$$
    \frac{\vmu}{k} = \bigsqcup_{(\vdelta, \vnu) \in \cD\left(\frac{\vd}{k'}, (\frac{\vmu}{k})^{(\frac{1}{k'})}\right)} \vnu^{(k'\va(\vdelta, \vnu))}
$$
and in particular
$$
    \ell(\vmu) = \ell\left(\frac{\vmu}{k}\right) = \sum_{(\vdelta, \vnu) \in \cD\left(\frac{\vd}{k'}, (\frac{\vmu}{k})^{(\frac{1}{k'})}\right)} k'\va(\vdelta, \vnu) \ell(\vnu), \qquad 
    \left|\frac{\vmu}{k}\right| =  \sum_{(\vdelta, \vnu) \in \cD\left(\frac{\vd}{k'}, (\frac{\vmu}{k})^{(\frac{1}{k'})}\right)} k'\va(\vdelta, \vnu) |\vnu|.
$$
Then \eqref{eqn:GRewrite2} simplifies to
\begin{equation}\label{eqn:GRewrite3}
    \begin{aligned}
        G^{X, L, \vf}_{\beta, \vmu}(t) = & (-1)^{\ell(\vmu)}\prod_{i=1}^s z_{\mu^i} \sum_{k \mid \beta, \vmu} \frac{\mu(k)}{k} \sum_{\vd: \pi(\vd) = \frac{\beta}{k}} \sum_{\substack{k' \mid \vd \\ (\frac{\vmu}{k})^{(\frac{1}{k'})} \text{ exists}}} \\
        &  \sum_{\va \in \cA_1\left(\frac{\vd}{k'}, (\frac{\vmu}{k})^{(\frac{1}{k'})}\right)} \frac{(k'|\va|)!}{\displaystyle\prod_{(\vdelta, \vnu) \in \cD\left(\frac{\vd}{k'}, (\frac{\vmu}{k})^{(\frac{1}{k'})}\right)} (k'\va(\vdelta, \vnu))!} \frac{(-1)^{k'|\va|-1}}{k'|\va|}  \prod_{(\vdelta, \vnu) \in \cD\left(\frac{\vd}{k'}, (\frac{\vmu}{k})^{(\frac{1}{k'})}\right)}\\
        & \left(\sum_{\vlambda \in T_{\vdelta}} \prod_{\bar{e} \in E(\Gamma)} (-1)^{(n^e + 1)\vdelta(\bar{e})} q^{\frac{k\kappa_{\vlambda(e)}n^e}{2}} \prod_{v \in V^3(\Gamma)} \cW_{\vlambda^v}(q^k) \prod_{i = 1}^s \frac{\chi_{\vlambda(-e_i)}(\nu^i)}{z_{\nu^i} [k\nu^i]}\right)^{k'\va(\vdelta, \vnu)}.
    \end{aligned}
\end{equation}
Moreover, we have
$$
    k^{\ell(\vmu)} (k'!)^s \biggm|  \frac{\displaystyle\prod_{i=1}^s z_{\mu^i}}{\displaystyle \prod_{(\vdelta, \vnu) \in \cD\left(\frac{\vd}{k'}, (\frac{\vmu}{k})^{(\frac{1}{k'})}\right)} \prod_{i=1}^s z_{\nu^i}^{k'\va(\vdelta, \vnu)}}
$$
where the factor $(k'!)^s$ comes from permutations the $k'$ copies of the $\nu^i$'s in the automorphism group, which implies that
$$
    \frac{1}{kk'} \frac{\displaystyle\prod_{i=1}^s z_{\mu^i}}{\displaystyle \prod_{(\vdelta, \vnu) \in \cD\left(\frac{\vd}{k'}, (\frac{\vmu}{k})^{(\frac{1}{k'})}\right)} \prod_{i=1}^s z_{\nu^i}^{k'\va(\vdelta, \vnu)}} \in \bZ
$$
since $\ell(\vmu), s \ge 1$. By \cite[Lemma A.2]{Konishi06b}, we also have
$$
    \frac{(k'|\va|)!}{\displaystyle |\va| \prod_{(\vdelta, \vnu) \in \cD\left(\frac{\vd}{k'}, (\frac{\vmu}{k})^{(\frac{1}{k'})}\right)} (k'\va(\vdelta, \vnu))!} \in \bZ.
$$
We may therefore rewrite \eqref{eqn:GRewrite3} as
\begin{align*}
    G^{X, L, \vf}_{\beta, \vmu}(t) = & \sum_{k \mid \beta, \vmu} \sum_{\vd: \pi(\vd) = \frac{\beta}{k}} \sum_{\substack{k' \mid \vd \\ (\frac{\vmu}{k})^{(\frac{1}{k'})} \text{ exists}}} \sum_{\va \in \cA_1\left(\frac{\vd}{k'}, (\frac{\vmu}{k})^{(\frac{1}{k'})}\right)}  \prod_{(\vdelta, \vnu) \in \cD\left(\frac{\vd}{k'}, (\frac{\vmu}{k})^{(\frac{1}{k'})}\right)} c_{k,\vd,k',\va,\vnu}\\
    & \left(\sum_{\vlambda \in T_{\vdelta}} \prod_{\bar{e} \in E(\Gamma)} (-1)^{(n^e + 1)\vdelta(\bar{e})} q^{\frac{k\kappa_{\vlambda(e)}n^e}{2}} \prod_{v \in V^3(\Gamma)} \cW_{\vlambda^v}(q^k) \prod_{i = 1}^s \frac{\chi_{\vlambda(-e_i)}(\nu^i)}{[k\nu^i]}\right)^{k'\va(\vdelta, \vnu)}.
\end{align*}
for some $c_{k,\vd,k',\va,\vnu} \in \bZ$.

To prove Lemma \ref{lem:GIntegral}, it suffices to show that for any $k,\vd,k',\va, (\vdelta,\vnu)$ as above,
$$
    \prod_{i=1}^s \frac{1}{[k\nu^i]^2} \sum_{\vlambda \in T_{\vdelta}} Y_{\vlambda}(q) \in \cL[t],
$$
where for $\vlambda \in T_{\vdelta}$ we set
$$
    Y_{\vlambda}(q) := \prod_{\bar{e} \in E(\Gamma)} (-1)^{(n^e + 1)\vdelta(\bar{e})} q^{\frac{k\kappa_{\vlambda(e)}n^e}{2}} \prod_{v \in V^3(\Gamma)} \cW_{\vlambda^v}(q^k) \prod_{i = 1}^s \chi_{\vlambda(-e_i)}(\nu^i)[k\nu^i].
$$
By \cite[Lemma P1]{BP01}, $[a]^2 \in \bZ_0[t]$ for any $a \in \bZ_{\ge 1}$, and thus it is further reduced to showing that
\begin{equation}\label{eqn:SumYq}
    \sum_{\vlambda \in T_{\vdelta}} Y_{\vlambda}(q) \in \cL[t].
\end{equation}
By \cite[Lemma 5.3(vii)]{Konishi06b},
$$
    \cW_{\vlambda^v}(q^k) \in q^{\frac{k|\vlambda^v|}{2}}\frac{\bZ[q,q^{-1}]}{\bZ_0[t]}.
$$
Note that for each edge $e_i$, since $|\vlambda(e_i)| = |\nu^i|$, we have
$$
    q^{\frac{k|\vlambda(e_i)|}{2}}[k\nu^i] \in \bZ[q].
$$
Therefore,
$$
    Y_{\vlambda}(q) \in \frac{\bZ[q,q^{-1}]}{\bZ_0[t]}.
$$
Now for $\vlambda \in T_{\vdelta}$, let $\vlambda^t \in T_{\vdelta}$ denote the element such that $\vlambda^t(e) = (\vlambda(e))^t$. By \cite[Lemma 5.3(viii)]{Konishi06b},
$$
    \cW_{(\vlambda^t)^v}(q^k) = (-1)^{|\vlambda^v|}\cW_{\vlambda^v}(q^{-k}).
$$
For each edge $e_i$, we have
$$
    (-1)^{|\vlambda^t(e_i)|}\chi_{\vlambda^t(-e_i)}(\nu^i)[k\nu^i] = \chi_{\vlambda(-e_i)}(\nu^i)[k\nu^i] \bigg|_{q \to q^{-1}}.
$$
Therefore,
$$
    Y_{\vlambda^t}(q) = Y_{\vlambda}(q^{-1}).
$$
For $\vlambda \in T_{\vdelta}$ such that $\vlambda^t = \vlambda$, $Y_{\vlambda}(q)$ is an element in $\frac{\bZ[q,q^{-1}]}{\bZ_0[t]}$ that is symmetric in $q$ and $q^{-1}$, and is thus in $\cL[t]$ by \cite[Lemma 6.2]{Konishi06a}. If otherwise $\vlambda^t \neq \vlambda$, similarly we have $Y_{\vlambda}(q) + Y_{\vlambda^t}(q) \in \cL[t]$. Summarizing the two cases gives \eqref{eqn:SumYq} and completes the proof. \qed

\subsection{An auxiliary FTCY graph}\label{sect:AuxGraph}
In this section, we prepare for the proof of Lemma \ref{lem:GFinite} by collecting additional notations and results. The main idea of the proof is to relate $G^{X, L, \vf}_{\beta, \vmu}(t)$ to generating functions defined by a trivalent FTCY graph constructed from $\Gamma$ and apply the analysis of this auxiliary graph given by \cite[Section 6]{Konishi06b}. Let $\Gamma'$ be the FTCY graph obtained from $\Gamma$ by adding two directed edges to each univalent vertex $v_i$ to make it a trivalent vertex. The position vectors of the new edges are chosen such that the degree $n^{e_i}$ is preserved for all $i = 1, \dots, s$. See Figure \ref{fig:AuxGraph} for an illustration. We have
$$
    V(\Gamma') = V^3(\Gamma') = V(\Gamma), \qquad E_c(\Gamma') = E_c^3(\Gamma') = E_c(\Gamma).
$$

\begin{figure}[h]
    \begin{tikzpicture}[scale=0.7]
        \coordinate (v0) at (0, 0);
        \coordinate (v1) at (-2, 0);        
                
        \node at (v0) {$\bullet$};
        \node at (v1) {$\bullet$};

        \draw (-2,0.8) -- (v1) -- (-2.8,-0.8);
        \draw (v1) -- (v0);
        \draw[->] (v1) -- (-0.95, 0);
        \draw[<->, dashed] (0,1) -- (0, -1);        

        \node at (0.5, 0) {$v_i$};
        \node[above] at (-1, 0) {$e_i$};
        \node[above] at (-2, 1) {$\cdots$};
        \node[below] at (-3, -1) {$\cdots$};
        \node at (-1, -2) {$\Gamma$};

        \node at (4, 0) {$\Rightarrow$};

        \coordinate (v10) at (10, 0);
        \coordinate (v11) at (8, 0);        
                
        \node at (v10) {$\bullet$};
        \node at (v11) {$\bullet$};

        \draw (8,0.8) -- (v11) -- (7.2,-0.8);
        \draw (v11) -- (v10);
        \draw[->] (v11) -- (9.05, 0);
        \draw[->] (v10) -- (10, -1);
        \draw[->] (v10) -- (11, 1);        

        \node at (10.5, 0) {$v_i$};
        \node[above] at (9, 0) {$e_i$};
        \node[above] at (8, 1) {$\cdots$};
        \node[below] at (7, -1) {$\cdots$};
        \node at (9, -2) {$\Gamma'$};

    \end{tikzpicture}

    \caption{Construction of $\Gamma'$ at vertex $v_i$.}
    \label{fig:AuxGraph}
\end{figure}

Let $\hY'$ be the FTCY 3-fold defined by $\Gamma'$. Then any effective class $\vd$ of $\hY$ can also be viewed as an effective class of $\Gamma'$ or $\hY'$, and we set
$$
    \Eff(\Gamma') := \{\vd: E_c(\Gamma') \to \bZ_{\ge 0} \mid \vd \neq 0\}.
$$
Following Theorem \ref{thm:TopVertex}, we define
$$ 
    Z^{\hY'}_{\vd}(q) = \sum_{\vlambda \in T_{\vd}} \prod_{\bar{e} \in E(\Gamma')} (-1)^{(n^e + 1)\vd(\bar{e})} q^{\frac{\kappa_{\vlambda(e)}n^e}{2}} \prod_{v \in V^3(\Gamma')} \cW_{\vlambda^v}(q).
$$
for each $\vd \in \Eff(\Gamma')$ and
$$
    Z'(q, Q) := 1 + \sum_{\vd \in \Eff(\Gamma')}Z^{\hY'}_{\vd}(q) Q^{\vd}.
$$
Moreover, we set
$$
    F^{\hY'}(q, Q): = \ln \left( Z^{\hY'}(q, Q)\right) = \sum_{\vd \in \Eff(\Gamma')}F^{\hY'}_{\vd}(q) Q^{\vd}.
$$
Then $Z^{\hY'}$ and $F^{\hY'}$ can be interpreted as generating functions of formal \emph{closed} Gromov-Witten invariants of $\hY'$.

The analysis of \cite[Section 6]{Konishi06b} expresses $Z^{\hY'}$ and $F^{\hY'}$ as sums of contributions indexed by certain sets of labeled graphs. We now briefly summarize the relevant results. Let $F(\Gamma')$ denote the set of flags in $\Gamma'$ and $F^3(\Gamma')$ denote the subset of flags $f = (v,e)$ such that $\bar{e} \in E_c(\Gamma')$. For each vertex $v \in V(\Gamma')$, we fix a counterclockwise ordering of the three flags at $v$ by $1, 2, 3$. We $f_1(v), f_2(v), f_3(v) \in F(\Gamma')$ to denote the flag at a vertex $v \in V(\Gamma')$ ordered by $1, 2, 3$ respectively, and $\iota(f) \in \{1, 2, 3\}$ to denote the order of a flag $f \in F(\Gamma')$. Moreover, we require that the ordering at each $v_i$ is chosen such that $\iota(v_i, -e_i) = 2$. 

Let $\vd \in \Eff(\Gamma')$. Let
$$
    D_{\vd}
$$
denote the set of \emph{$\Gamma'$-sets of degree $\vd$}, which are pairs
$$
    \left(\vnu_V = (\nu^v)_{v \in V(\Gamma')}, \vnu_F = (\nu^f)_{f \in F^3(\Gamma')} \right)
$$
of tuples of partitions satisfying the following conditions:
\begin{itemize}
    \item For $f = (v,e) \in F^3(\Gamma')$, $|\nu^v| + |\nu^f| = \vd(\bar{e})$ if $\iota(f) \in \{1,3\}$.
    \item For $f = (v,e) \in F^3(\Gamma')$, $|\nu^f| = \vd(\bar{e})$ if $\iota(f) = 2$.
    \item For $v \in V(\Gamma')$, $\nu^v = \emptyset$ if $f_1(v) \not \in F^3(\Gamma')$ or $f_3(v) \not \in F^3(\Gamma')$.
\end{itemize}
In particular, by our condition at the vertices $v_i$'s, $\nu^{v_i} = \emptyset$ and $|\nu^{(v_i, -e_i)}| = \vd(\bar{e}_i)$ for each $i = 1, \dots, s$. For any $\vmu = (\mu^1, \dots, \mu^s) \in \cP^s$ such that $|\mu^i| = \vd(\bar{e}_i)$ for each $i$, we set
$$
    D_{\vd, \vmu} := \{\left(\vnu_V , \vnu_F  \right) \in D_{\vd} \mid \nu^{(v_i, -e_i)} = \mu^i \text{ for } i = 1, \dots, s\}.
$$

Given any $\vlambda \in T_{\vd}$, at each vertex $v \in V(\Gamma')$, the three point function $\cW_{\vlambda^v}$ can be written as a sum of contributions indexed by the choice of partitions $\nu^v, \nu^{f_1(v)}, \nu^{f_2(v)}, \nu^{f_3(v)}$ (\cite[Lemma 6.2]{Konishi06b}). As a result, \cite{Konishi06b} constructed a set of (possibly disconnected) labeled graphs $\Comb_{\Gamma'}^\bullet(\vnu_V, \vnu_F)$ for each $(\vnu_V, \vnu_F) \in D_{\vd}$ and assigned an amplitude $\cH(W)(q)$ for each labeled graph $W \in \Comb_{\Gamma'}^\bullet(\vnu_V, \vnu_F)$ such that
\begin{equation}\label{eqn:KonishiZ}
    Z^{\hY'}_{\vd}(q) = \sum_{(\vnu_V, \vnu_F) \in D_{\vd}} \frac{1}{z_{\vnu_V}z_{\vnu_F}} \sum_{W \in \Comb_{\Gamma'}^\bullet(\vnu_V, \vnu_F)} \cH(W)(q)
\end{equation}
(\cite[Proposition 6.11]{Konishi06b}). Moreover, if $\Comb_{\Gamma'}^\circ(\vnu_V, \vnu_F) \subseteq \Comb_{\Gamma'}^\bullet(\vnu_V, \vnu_F)$ denotes the subset of connected graphs, then
\begin{equation}\label{eqn:KonishiF}
    F^{\hY'}_{\vd}(q) = \sum_{(\vnu_V, \vnu_F) \in D_{\vd}} \frac{1}{z_{\vnu_V}z_{\vnu_F}} \sum_{W \in \Comb_{\Gamma'}^\circ(\vnu_V, \vnu_F)} \cH(W)(q)
\end{equation}
(\cite[Proposition 6.13]{Konishi06b}).

\subsection{Proof of Lemma \ref{lem:GFinite}}\label{sect:ProofFinite}
We start by considering the following expression of $G^{X, L, \vf}_{\beta, \vmu}(t)$ which follows from the first line of \eqref{eqn:FRewrite} in the same way as how \eqref{eqn:GRewrite3} is derived:
\begin{equation}\label{eqn:GRewrite4}
    \begin{aligned}
        G^{X, L, \vf}_{\beta, \vmu}(t) = & (-1)^{\ell(\vmu)}z_{\vmu} \sum_{k \mid \beta, \vmu} \frac{\mu(k)}{k} \sum_{\vd: \pi(\vd) = \frac{\beta}{k}} \sum_{\va \in \cA\left(\vd, \frac{\vmu}{k}\right)} \frac{|\va|!}{\displaystyle\prod_{(\vdelta, \vnu) \in \cD\left(\vd, \frac{\vmu}{k}\right)} \va(\vdelta, \vnu)!} \frac{(-1)^{|\va|-1}}{|\va|}  \\
        & \prod_{(\vdelta, \vnu) \in \cD\left(\vd, \frac{\vmu}{k}\right)} \left(\sum_{\vlambda \in T_{\vdelta}} \prod_{\bar{e} \in E(\Gamma)} (-1)^{(n^e + 1)\vdelta(\bar{e})} q^{\frac{k\kappa_{\vlambda(e)}n^e}{2}} \prod_{v \in V^3(\Gamma)} \cW_{\vlambda^v}(q^k) \prod_{i = 1}^s \frac{\chi_{\vlambda(-e_i)}(\nu^i)}{z_{\nu^i} [k\nu^i]}\right)^{\va(\vdelta, \vnu)}.
    \end{aligned}
\end{equation}
Motivated by this expression, we define
$$
    Z'_{\vd,\vmu}(q) := \sum_{\vlambda \in T_{\vd}} \prod_{\bar{e} \in E(\Gamma)} (-1)^{(n^e + 1)\vd(\bar{e})} q^{\frac{\kappa_{\vlambda(e)}n^e}{2}} \prod_{v \in V^3(\Gamma)} \cW_{\vlambda^v}(q) \prod_{i = 1}^s \frac{\chi_{\vlambda(-e_i)}(\mu^i)}{z_{\mu^i} [\mu^i]}
$$
for each $(\vd, \vmu) \in \Eff(\Gamma)$ and
$$
    Z'(q, Q, P) := 1 + \sum_{(\vd, \vmu) \in \Eff(\Gamma)}Z'_{\vd, \vmu}(q) Q^{\vd}P_{\vmu}.
$$
Moreover, we set
$$
    F'(q, Q, P): = \ln \left( Z'(q, Q, P)\right) = \sum_{(\vd, \vmu) \in \Eff(\Gamma)}F'_{\vd, \vmu}(q) Q^{\vd}P_{\vmu}.
$$
Then \eqref{eqn:GRewrite4} can be rewritten as
\begin{equation}\label{eqn:GRewrite5}
    G^{X, L, \vf}_{\beta, \vmu}(t) = (-1)^{\ell(\vmu)} z_{\vmu} \sum_{k \mid \beta, \vmu} \frac{\mu(k)}{k} \sum_{\vd: \pi(\vd) = \frac{\beta}{k}} F'_{\vd, \frac{\vmu}{k}}(q^k).
\end{equation}

Now we relate $Z'$ and $F'$ to the generating functions $Z^{\hY'}$ and $F^{\hY'}$ defined by the FTCY graph $\Gamma'$ introduced in Section \ref{sect:AuxGraph}. Let $\vd \in \Eff(\Gamma')$ and $\vlambda \in T_{\vd}$. At each $v_i$ viewed as a trivalent vertex in $\Gamma'$, we have $\vlambda^{v_i} = (\vlambda(-e_i), \emptyset, \emptyset)$. By the Frobenius character formula,
$$
    \cW_{\vlambda^{v_i}}(q) = \cW_{(\vlambda(-e_i), \emptyset, \emptyset)} = s_{\vlambda(-e_i)}(q^\rho) = \sum_{\mu^i \vdash \vd(\bar{e}_i)}\frac{\chi_{\vlambda(-e_i)}(\mu^i)}{z_\mu[\mu^i]},
$$
and this summation coincides with the expression of $\cW_{\vlambda^{v_i}}$ as a sum of contributions indexed by $\nu^{f_2(v_i)} = \mu^i, \nu^{v_i} = \nu^{f_1(v_i)} = \nu^{f_3(v_i)} = \emptyset$ described in Section \ref{sect:AuxGraph} (\cite[Lemma 6.2]{Konishi06b}). It follows that 
\begin{align*}
    Z^{\hY'}_{\vd}(q) & = \sum_{\mu^i \vdash \vd(\bar{e}_i)} \sum_{\vlambda \in T_{\vd}} \prod_{\bar{e} \in E(\Gamma)} (-1)^{(n^e + 1)\vd(\bar{e})} q^{\frac{\kappa_{\vlambda(e)}n^e}{2}} \prod_{v \in V^3(\Gamma)} \cW_{\vlambda^v}(q) \prod_{i = 1}^s \frac{\chi_{\vlambda(-e_i)}(\mu^i)}{z_{\mu^i} [\mu^i]}\\
    & = \sum_{\mu^i \vdash \vd(\bar{e}_i)} Z'_{\vd, \vmu}(q),
\end{align*}
and \eqref{eqn:KonishiZ} further implies that
$$
    Z'_{\vd, \vmu}(q) = \sum_{(\vnu_V, \vnu_F) \in D_{\vd, \vmu}} \frac{1}{z_{\vnu_V}z_{\vnu_F}} \sum_{W \in \Comb_{\Gamma'}^\bullet(\vnu_V, \vnu_F)} \cH(W)(q).
$$
The proof of \eqref{eqn:KonishiF} from \eqref{eqn:KonishiZ} in \cite[Proposition 6.13]{Konishi06b} in fact shows that\footnote{In more detail, and in the context of the proof of \cite[Proposition 6.13]{Konishi06b}, it suffices to introduce the formal variables $P$ to the definition of the map $\Psi$:
$$
    \frac{1}{|\Aut(G)|}\Psi(\bar{G}) = \sum_{\substack{W \in \Comb_{\Gamma'}^\bullet(\vnu_V, \vnu_F) \\ \bar{W} = G}} \frac{1}{z_{\vnu_V}z_{\vnu_F}}\cH(W)Q^{\vd}P_{(\nu^{(v_1, -e_1)}, \dots, \nu^{(v_s, -e_s)})}.
$$
The resulting map $\Psi$ is still grade-preserving and multiplicative. Our desired result follows from identifying the coefficients of $Q^{\vd}P_{\vmu}$ on the two sides of the exponential formula.}
$$
    F'_{\vd, \vmu}(q) = \sum_{(\vnu_V, \vnu_F) \in D_{\vd, \vmu}} \frac{1}{z_{\vnu_V}z_{\vnu_F}} \sum_{W \in \Comb_{\Gamma'}^\circ(\vnu_V, \vnu_F)} \cH(W)(q).
$$

We now return to \eqref{eqn:GRewrite5} and continue rewriting $G^{X, L, \vf}_{\beta, \vmu}(t)$ as
\begin{align*}
    G^{X, L, \vf}_{\beta, \vmu}(t) & = (-1)^{\ell(\vmu)} z_{\vmu} \sum_{k \mid \beta, \vmu} \sum_{k' \mid k} \sum_{\substack{\vd: \pi(\vd) = \frac{\beta}{k} \\ \gcd(\vd, \frac{\mu}{k})=1}} \frac{k'\mu(\frac{k}{k'})}{k} F'_{k'\vd, \frac{k'\vmu}{k}}(q^{\frac{k}{k'}})\\
    & = (-1)^{\ell(\vmu)} z_{\vmu} \sum_{k \mid \beta, \vmu} \sum_{\substack{\vd: \pi(\vd) = \frac{\beta}{k} \\ \gcd(\vd, \frac{\vmu}{k})=1}} \sum_{(\vnu_V, \vnu_F) \in D_{\vd, \frac{\vmu}{k}}} \sum_{k' \mid k}  \frac{k'\mu(\frac{k}{k'})}{kz_{k'\vnu_V}z_{k'\vnu_F}} \sum_{W \in \Comb_{\Gamma'}^\circ(\vnu_V, \vnu_F)} \cH(W_{(k')})(q^{\frac{k}{k'}}).
\end{align*}
Here, as in \cite[Section 6.7]{Konishi06b}, $W_{(k')} \in \Comb_{\Gamma'}^\circ(k'\vnu_V, k'\vnu_F)$ denotes the labeled graph obtained from $W$ by multiplying all vertex labels by $k'$. Moreover, $\gcd(\vd, \frac{\mu}{k})$ denotes the greatest common divisor of the set of all values $\vd(\bar{e})$ for $e \in E_c(\Gamma)$ and all parts of partitions in $\frac{\vmu}{k}$. The condition $\gcd(\vd, \frac{\mu}{k}) = 1$ implies that for any $(\vnu_V, \vnu_F) \in D_{\vd, \frac{\vmu}{k}}$, we have $\gcd(\vnu_V, \vnu_F) = 1$, which is the greatest common divisor of the set of all parts of partitions in $\vnu_V, \vnu_F$. Thus we may apply \cite[Proposition 6.14]{Konishi06b} to obtain that for any $k, \vd, (\vnu_V, \vnu_F)$,
$$
    t\sum_{k' \mid k}  \frac{k'\mu(\frac{k}{k'})}{kz_{k'\vnu_V}z_{k'\vnu_F}} \sum_{W \in \Comb_{\Gamma'}^\circ(\vnu_V, \vnu_F)} \cH(W_{(k')})(q^{\frac{k}{k'}}) \in \bQ[t].
$$
It then follows that $tG^{X, L, \vf}_{\beta, \vmu}(t) \in \bQ[t]$, proving the lemma. \qed

\section{Open/closed BPS correspondence and integrality}\label{sect:OpenClosed}
In this section, we specialize to the case of disk invariants of a single outer brane and extend the open/closed correspondence \cite{LY21,LY22} of Gromov-Witten invariants to a correspondence of BPS invariants (Theorem \ref{thm:BPSCorrespondence}). Using the integrality of the open BPS invariants (Theorem \ref{thm:OpenBPS}), we obtain the integrality of the closed BPS invariants (Corollary \ref{cor:ClosedBPS}).

\subsection{Specializing to disk invariants}\label{sect:Disk}
We first consider the specialization of results in Section \ref{sect:OpenBPS} to the case of disk invariants of a single outer brane. Let $s=1$ and we write $(L,f) = (L^1, f_1)$, $d = d_1 \in \bZ_{\ge 1}$, $B = B_1$, and $C = C_1$. We further consider the case $\mu = \mu^1 = (d)$. The open Gromov-Witten invariant
$$
    N^{X, L, f}_{0, \beta, (d)} \in \bQ
$$
is a virtual count of open stable maps from domains of arithmetic genus zero and with a single boundary component, and we refer to it as the degree-$(\beta, (d))$ \emph{disk invariant} of $(X, L, f)$.

In the above setting, the genus-zero limit $g_s \to 0$ (or equivalently $q \to 1$) of the LMOV resummation formula \eqref{eqn:LMOVResumGs} gives
\begin{equation}\label{eqn:LMOVOpen}
    N^{X, L, f}_{0, \beta, (d)} = - \sum_{k \mid \beta, d} \frac{n^{X, L, f}_{0,\frac{\beta}{k}, \left(\frac{d}{k}\right)}}{k^2}.
\end{equation}
By Theorem \ref{thm:OpenBPS}, the genus-zero, degree-$(\beta, (d))$ open BPS invariant of $(X, L, f)$ defined by the above is an integer for any $\beta, d$:
$$
    n^{X, L, f}_{0, \beta, (d)} \in \bZ.
$$

\subsection{Gromov-Witten correspondence}\label{sect:OpenClosedGW}
In \cite{LY21}, Liu and the author identified the disk invariants of the open geometry $(X,L,f)$ with the genus-zero Gromov-Witten invariants of a closed geometry on a smooth toric Calabi-Yau 4-fold $\tX$. As in \cite[Assumption 2.3]{LY21}, we make the following assumption.

\begin{assumption}\label{assump:Outer} \rm{
We assume that $L$ remains an outer brane in a (and thus any) toric Calabi-Yau semi-projective partial compactification of $X$.
}
\end{assumption}

In the case of a single outer brane ($s=1$), the FTCY graph $\Gamma$ constructed in Section \ref{sect:FTCY} in fact defines a relative Calabi-Yau 3-fold $(Y,D)$ such that $Y = X \sqcup D$ and $(\hY, \hD)$ is the formal completion of $(Y,D)$ along the toric 1-skeleton $Y^1$. Then $\tX = \Tot(\cO_Y(-D))$. The normal bundle of the new $\tT'$-invariant projective line $C$ in $\tX$ is
$$
    N_{C/\tX} \cong \cO_{\bP^1}(f) \oplus \cO_{\bP^1}(-f-1) \oplus \cO_{\bP^1}(-1).
$$

To provide more detail of the construction, we now summarize the description of $\tX$ in \cite[Section 2]{LY21} in terms of the toric data. Recall from Section \ref{sect:OpenGeometry} that the outer brane $L$ determines cones $\tau_1 \in \Sigma(2) \setminus \Sigma(2)_c$ and $\sigma_1 \in \Sigma(3)$. Now, let
$$
    b_1, \dots, b_R \in \{v \in N \mid \inner{\su_3, v} = 1\} \subset N
$$
be a listing of the primitive generators of the rays in $\Sigma$, labelled in such a way that:
\begin{itemize}
    \item $\tau_1$ is spanned by $\{b_2, b_3\}$;
    \item $\sigma_1$ is spanned by $\{b_1, b_2, b_3\}$;
    \item the orientation on $N$ determined by the basis $\{b_1, b_2, b_3\}$ agrees with the standard orientation.
\end{itemize}
Then, to construct $Y$ from $X$, we add an additional ray spanned by
$$
    b_{R+1} = -b_1 - fb_2 + (f+1)b_3 \quad \in \ker(\su_3) \subset N.
$$
Moreover, we add an additional 3-cone that is spanned by $\{b_2, b_3, b_{R+1}\}$, as well as all the faces of the cone. The resulting fan $\hSi$ is the toric fan of $Y$, and the divisor corresponding to the ray spanned by $b_{R+1}$ is $D = Y \setminus X$. The curve $C \cong \bP^1$ is the orbit $V(\tau_1)$ in $Y$, with
$$
    N_{C/Y} \cong \cO_{\bP^1}(f) \oplus \cO_{\bP^1}(-f-1).
$$
Finally, let $\tN := N \oplus \bZ \tv \cong \bZ^4$. We describe the toric fan $\tSi$ of $\tX$ in $\tN \otimes \bR \cong \bR^4$ by listing all the rays and 4-cones. There are $R+2$ rays in $\tSi$, spanned by $b_1, \dots, b_R$, and
$$
    \tb_{R+1} = b_{R+1} + b_3 + \tv, \qquad \tb_{R+2} = b_3 + \tv
$$
respectively. Extending $\su_3: N \to \bZ$ to $\tN$ by setting $\inner{\su_3,\tv} = 0$, we see that $\su_3$ pairs to 1 with all the primitive generators above, which is equivalent to that $\tX$ is Calabi-Yau. Each 3-cone $\sigma \in \Sigma(3)$ gives rise to a 4-cone in $\tSi$ that is spanned by $\sigma$ and $\tb_{R+2}$. Moreover, there is an additional $4$-cone spanned by $\{b_2, b_3, \tb_{R+1}, \tb_{R+2}\}$. The fan $\tSi$ consists of the 4-cones above and all their faces.

It follows from the construction that there is an isomorphism
$$
    \iota: H_2(X,L; \bZ) \to H_2(Y;\bZ) \to H_2(\tX;\bZ).
$$
Moreover, we consider the integral cohomology class
$$
    \tgamma := [\tD][\tD_2] \in H^4(\tX;\bZ)
$$
where $\tD$ (resp. $\tD_2$) is the toric divisor in $\tX$ corresponding to the ray spanned by $\tb_{R+1}$ (resp. $b_2$).

\begin{theorem}[\cite{LY21}]\label{thm:GWCorrespondence}
For the effective class $\tbeta:= \iota(\beta = \beta'+d[B])$, we have
$$
    N^{X, L, f}_{0, \beta, (d)} = N^{\tX}_{0, \tbeta}(\tgamma)
$$
where $N^{\tX}_{0, \tbeta}(\tgamma)$ is the genus-zero, degree-$\tbeta$ \emph{closed Gromov-Witten invariant} of $\tX$ with 1-pointed insertion $\tgamma$.
\end{theorem}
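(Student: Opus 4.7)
The plan is to establish the identity by $\tT'$-equivariant localization on both sides, where $\tT' \subset (\bC^*)^4$ denotes the Calabi-Yau 3-subtorus of the torus acting on $\tX$ (so $T' \subset \tT'$ acts on $Y$ via the zero section). The open side $N^{X,L,f}_{0,\beta,(d)}$ is already encoded, through the construction of Section~\ref{sect:GW}, as a $T'$-equivariant sum over decorated graphs in the toric 1-skeleton of $\hY$ with a relative contact condition of order $d$ at the univalent vertex $v_1$. The strategy is to produce a matching $\tT'$-equivariant graph sum for $N^{\tX}_{0,\tbeta}(\tgamma)$ and compare the two term by term.

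First I would analyze the insertion. Since $\tgamma = [\tD][\tD_2]$ is Poincar\'e dual to a codimension-2 subscheme, its $\tT'$-equivariant localization concentrates on fixed points lying on both $\tD$ and $\tD_2$. The explicit description of $\tSi$ shows there is a unique such fixed point, corresponding to the 4-cone spanned by $\{b_2, b_3, \tb_{R+1}, \tb_{R+2}\}$; call it $\tp_0$. Thus the marked point of any contributing fixed map must be sent to $\tp_0$, forcing the leg carrying it to lie along the new $\tT'$-invariant $\bP^1$, namely $C = V(\tau_1) \subset Y \hookrightarrow \tX$, with $N_{C/\tX} \cong \cO_{\bP^1}(f) \oplus \cO_{\bP^1}(-f-1) \oplus \cO_{\bP^1}(-1)$.

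Second I would match the remaining graph combinatorics. Because $\tSi$ is obtained from the fan of $Y$ by attaching two CY-compatible rays $\tb_{R+1}, \tb_{R+2}$, every $\tT'$-invariant curve in $\tX$ other than $C$ is contained in the zero section $Y \hookrightarrow \tX$. Consequently the $\tT'$-fixed graphs on $\tX$ carrying the marked point at $\tp_0$ are in natural bijection with the $T'$-fixed relative graphs to $(\hY, \hD)$ with profile $(d)$ along $\hD^1$. The identity then reduces to a single local comparison along $C$: the relative edge contribution on the open side (built from $N_{V(\tau_1)/X} \cong \cO_{\bP^1}(f) \oplus \cO_{\bP^1}(-f-1)$, the framing $f$, a relative target-$\psi$ class, and the sign $(-1)^{d-1}$ in the open/relative formula) must equal the closed edge contribution (built from $N_{C/\tX}$, the obstruction $H^1(\bP^1,\cO(-d-1))$ coming from the fiber direction of $\cO_Y(-D)$, and $\ev^*_{\tp_0}(\tgamma)$).

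The main obstacle is precisely this local comparison. The extra $\cO_{\bP^1}(-1)$ summand of $N_{C/\tX}$, together with its contribution to the obstruction bundle on degree-$d$ maps, produces an extra equivariant factor that must be cancelled exactly by $\ev^*_{\tp_0}(\tgamma)$; in particular the sign $(-1)^{d-1}$ from the open/relative correspondence must reappear from this cancellation. Once this edge-level identity is verified, the matching of all remaining vertex and edge factors is automatic from the identification of fans, and summing over graphs yields the theorem.
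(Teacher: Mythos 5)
The paper does not prove Theorem \ref{thm:GWCorrespondence} here; it is quoted from \cite{LY21}, whose proof is indeed a $\tT'$-equivariant localization comparison of the two sides, so your overall outline (insertion $\tgamma=[\tD][\tD_2]$ localizing to the unique fixed point $\tp_0$ of the cone $\{b_2,b_3,\tb_{R+1},\tb_{R+2}\}$, reduction to a local analysis along $C$ with $N_{C/\tX}\cong\cO(f)\oplus\cO(-f-1)\oplus\cO(-1)$) is the right one and matches the cited argument in spirit.

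There are, however, two substantive gaps. First, the claimed ``natural bijection'' between $\tT'$-fixed graphs on $\tX$ with marked point at $\tp_0$ and $T'$-fixed relative graphs with profile $(d)$ is not literally true. On the closed side the degree $d$ over $C$ may be distributed among several edges covering $C$, attached to distinct contracted components over $\tp_0$ and over $V(\sigma_1)$ (e.g.\ a chain with a contracted marked component at $\tp_0$ joining two edges over $C$ of degrees $d_1+d_2=d$); on the relative side the single contact point of order $d$ forces these configurations into the expanded/rubber part of the fixed loci, where they are packaged by the double Hurwitz functions $\Phi^\bullet_{\mu\nu}$ of \cite{LLLZ09}. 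The two sides agree only after resumming these contributions, not graph by graph, and this resummation is a real step of the proof. Second, the ``main obstacle'' you isolate --- the edge-level identity matching the relative/disk contribution (disk factor, framing $f$, target $\psi$-class, the sign $(-1)^{d-1}=(-1)^{|\vmu|-\ell(\vmu)}$) against the closed contribution (the $H^1(\bP^1,\cO(-d-1))$ obstruction from the $\cO_Y(-D)$ direction and $\ev^*\tgamma$) --- is precisely the nontrivial content of the theorem, and you state it as a requirement rather than verifying it. As written, the proposal is a correct plan of attack but not a proof: both the resummation over multi-edge configurations and the explicit weight comparison along $C$ must be carried out, as they are in \cite{LY21}.
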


We note that since $\tX$ is non-compact, the invariant $N^{\tX}_{0, \tbeta}(\tgamma)$ is defined by localization with respect to the Calabi-Yau 3-torus $\tT'$ of $\tX$. We refer to \cite[Section 3]{LY21} for additional details.


\begin{remark}\label{rem:sp} \rm{
When $X$ is semi-projective, in \cite{LY22}, Liu and the author further showed that the disk invariants of $(X, L, f)$ can be identified with the genus-zero Gromov-Witten invariants of the toric Calabi-Yau semi-projective partial compactification of $\tX$, which we denote by $\tX^{\mathrm{sp}}$ here. Depending on the choice of framing $f$, $\tX^{\mathrm{sp}}$ is in general a toric Calabi-Yau 4-\emph{orbifold}. For our present purpose, consider the case where $\tX^{\mathrm{sp}}$ is still smooth. In the process of the partial compactification, the two toric divisors $\tD$ and $\tD_2$ may acquire an additional common torus fixed point in $\tX^{\mathrm{sp}}$. If this does not happen, we can still use the class $[\tD][\tD_2]$ in the open/closed correspondence as above. It this happens, we need to replace the insertion by an \emph{equivariant} cohomology class that restrict trivially to the new fixed point. In fact, the new fixed point corresponds to an Aganagic-Vafa outer brane in $X$ neighboring $L$, with framing depending on $f$. If we still insert the non-equivariant class $[\tD][\tD_2]$ for $\tX^{\mathrm{sp}}$, in localization we obtain contributions from both fixed points in $\tD \cap \tD_2$ which correspond to disk invariants of the two framed outer branes in $X$ respectively with the same boundary winding number.
}\end{remark}

\begin{example}\label{ex:C3} \rm{
Consider the basic example $X = \bC^3$ as in \cite[Section 1.2]{LY21}, \cite[Section 2.6.1]{LY22}. For an outer brane $L$ with framing $f$, we have
$$
    Y = \Tot(\cO_{\bP^1}(f) \oplus \cO_{\bP^1}(-f-1)), \qquad \tX = \Tot(\cO_{\bP^1}(f) \oplus \cO_{\bP^1}(-f-1) \oplus \cO_{\bP^1}(-1)).
$$
Consider the case $f<-1$, where $\tX$ is not semi-projective. The partial compactification produces
$$
    \tX^{\mathrm{sp}} = \Tot(\cO_{\bP(1,1,-f-1)}(f) \oplus \cO_{\bP(1,1,-f-1)}(-1)).
$$
Now we specialize to $f = -2$, where $\tX^{\mathrm{sp}} = \Tot(\cO_{\bP^2}(-2) \oplus \cO_{\bP^2}(-1))$. The construction is illustrated in Figure \ref{fig:ExC3}, where the fan of $X$ is the cone over the triangle on the left and the fan of $\tX^{\mathrm{sp}}$ is the cone over the triangulated 3-dimensional polytope on the right. In this case, the same 4-fold $\tX^{\mathrm{sp}}$ is reached if we start the construction instead with the neighboring outer brane in $X$ corresponding to the 2-cone spanned by $\{b_1, b_2\}$ with framing $1$. The closed invariant of $\tX^{\mathrm{sp}}$ with non-equivariant insertion $[\tD][\tD_2]$ has contribution from the disk invariants of the two framed outer branes. 
}
\end{example}

\begin{figure}[h]
\begin{center}
    \begin{tikzpicture}[scale=1.5]
        \coordinate (1) at (-3.2, 0.1);
        \coordinate (2) at (-4, 0.9);        
        \coordinate (3) at (-4, 0.1);
                
        \node at (1) {$\bullet$};
        \node at (2) {$\bullet$};
        \node at (3) {$\bullet$};

        \node at (-2.9, 0.1) {$b_3$};
        \node at (-4, 1.2) {$b_1$};
        \node at (-4.3, 0.1) {$b_2$};

        \draw[ultra thick] (-3.6, 0.2) -- (-3.6, 0);
        \node[below] at (-3.6, 0) {$(L, f = -2)$};

        \draw (1) -- (2) -- (3) -- (1);

        \node at (-3.6, -1) {$X = \bC^3$};

        \coordinate (11) at (2, 0);
        \coordinate (22) at (1.6, 0.7);        
        \coordinate (33) at (1, 0);
        \coordinate (44) at (0.4, 0.3);        
        \coordinate (55) at (1, 1);

        \node at (11) {$\bullet$};
        \node at (22) {$\bullet$};
        \node at (33) {$\bullet$};
        \node at (44) {$\bullet$};
        \node at (55) {$\bullet$};

        \node at (2.3, 0) {$b_3$};
        \node at (1.9, 0.7) {$b_1$};
        \node at (1, -0.3) {$b_2$};
        \node at (0.1, 0.3) {$\tb_4$};
        \node at (1, 1.3) {$\tb_5$};

        \draw (11) -- (33) -- (44) -- (55) -- (22) -- (11) -- (44);
        \draw (11) -- (55);
        \draw[dashed] (55) -- (33) -- (22) -- (44);

        \node at (1.3, -1) {$\tX^{\mathrm{sp}} = \Tot(\cO_{\bP^2}(-2) \oplus \cO_{\bP^2}(-1))$};
    \end{tikzpicture}
\end{center}

    \caption{Example of $X = \bC^3$ and $f=-2$.}
    \label{fig:ExC3}
\end{figure}

\subsection{BPS correspondence}\label{sect:OpenClosedBPS}
For Calabi-Yau 4-folds, Klemm-Pandharipande \cite{KP08} expressed integral structures of genus-zero Gromov-Witten invariants by a generalization of the Aspinwall-Morrison multiple covering formula \cite{AM93}. For invariants with 1-pointed insertions, the resummation of \cite{KP08} is
\begin{equation}\label{eqn:KPClosed}
    N^{\tX}_{0, \tbeta}(\tgamma) = \sum_{k \mid \tbeta} \frac{n^{\tX}_{0,\frac{\tbeta}{k}}(\tgamma)}{k^2}.
\end{equation}
Here for $k \in \bZ_{\ge 1}$, we say that $k \mid \tbeta$ if $\frac{\tbeta}{k} \in H_2(\tX;\bZ)$. Note that if $\tbeta = \iota(\beta = \beta' + d[B])$, then $k \mid \tbeta$ if and only if $k \mid \beta, d$. We refer to the coefficient
$$
    n^{\tX}_{0, \tbeta}(\tgamma) \in \bQ
$$
as the genus-zero, degree-$\tbeta$ \emph{closed BPS invariant} of $\tX$.

By a direct comparison of the resummations \eqref{eqn:LMOVOpen}, \eqref{eqn:KPClosed}, we see that Theorem \ref{thm:GWCorrespondence} immediately implies the following open/closed correspondence of BPS invariants.

\begin{theorem}\label{thm:BPSCorrespondence}
For $\tbeta = \iota(\beta = \beta'+d[B])$, we have
$$
    n^{X, L, f}_{0, \beta, (d)} = -n^{\tX}_{0, \tbeta}(\tgamma).
$$
\end{theorem}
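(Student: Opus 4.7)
The plan is to derive the identity by a direct Möbius-inversion argument, comparing the two resummation formulas \eqref{eqn:LMOVOpen} and \eqref{eqn:KPClosed} and then plugging in the Gromov-Witten open/closed correspondence of Theorem \ref{thm:GWCorrespondence}. Since the two resummations have the identical shape $\sum_{k\mid\,\cdot\,} k^{-2}(\cdot)$ up to an overall sign, invoking Möbius inversion will express each BPS invariant as the same Möbius-weighted sum of Gromov-Witten invariants, up to that sign.

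First, I would record that the divisibility conditions on the two sides match: for $\beta = \beta' + d[B] \in H_2(X,L;\bZ)$ and $\tbeta = \iota(\beta) \in H_2(\tX;\bZ)$, the isomorphism $\iota$ identifies $k\mid\beta$ and $k\mid d$ (i.e. $\frac{\beta}{k} \in H_2(X,L;\bZ)$) with $k\mid\tbeta$ (i.e. $\frac{\tbeta}{k} \in H_2(\tX;\bZ)$), and moreover $\iota\bigl(\frac{\beta}{k}\bigr) = \frac{\tbeta}{k}$. This is a direct consequence of $\iota$ being a $\bZ$-module isomorphism sending $[B]$ to the class whose $d$-fold corresponds to the $B$-direction.

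Next, I would apply Möbius inversion to \eqref{eqn:LMOVOpen} and \eqref{eqn:KPClosed} to obtain
\begin{equation*}
    n^{X,L,f}_{0,\beta,(d)} = -\sum_{k\mid\beta,d} \frac{\mu(k)}{k^2}\, N^{X,L,f}_{0,\frac{\beta}{k},\bigl(\frac{d}{k}\bigr)},
    \qquad
    n^{\tX}_{0,\tbeta}(\tgamma) = \sum_{k\mid\tbeta} \frac{\mu(k)}{k^2}\, N^{\tX}_{0,\frac{\tbeta}{k}}(\tgamma).
\end{equation*}
Then Theorem \ref{thm:GWCorrespondence}, applied to each $\frac{\beta}{k}$ (noting $\iota\bigl(\frac{\beta}{k}\bigr) = \frac{\tbeta}{k}$), gives $N^{X,L,f}_{0,\frac{\beta}{k},(\frac{d}{k})} = N^{\tX}_{0,\frac{\tbeta}{k}}(\tgamma)$ term by term. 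Combining these, the two right-hand sides above coincide up to the overall minus sign, yielding $n^{X,L,f}_{0,\beta,(d)} = -n^{\tX}_{0,\tbeta}(\tgamma)$.

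There is no real obstacle here: the content of the theorem is entirely in the Gromov-Witten correspondence (Theorem \ref{thm:GWCorrespondence}), which is already assumed, and in the matching of divisibility under $\iota$. The only thing one should do carefully is ensure the effective-class hypothesis $(\beta', d) \neq 0$ persists through division by $k$, so that every term in the Möbius sum lies in the range of defined invariants on both sides; this is immediate because $k \mid d$ with $d \geq 1$ forces $d/k \geq 1$, keeping the divided class effective in $\Eff(X,L,f)$ and its image effective in $H_2(\tX;\bZ)$.
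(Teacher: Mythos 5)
Your proposal is correct and is essentially the paper's own argument: the paper simply notes that $k \mid \tbeta$ iff $k \mid \beta, d$ under $\iota$ and then states that a direct comparison of the two resummations \eqref{eqn:LMOVOpen} and \eqref{eqn:KPClosed} together with Theorem \ref{thm:GWCorrespondence} immediately gives the result, which is exactly your M\"obius-inversion observation made explicit.
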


\subsection{Integrality of closed BPS invariants}\label{sect:ClosedBPS}

As a result of the integrality of open BPS invariants (Theorem \ref{thm:OpenBPS}), Theorem \ref{thm:BPSCorrespondence} implies the integrality of closed BPS invariants of $\tX$ with 1-pointed insertion $\tgamma$, verifying the conjecture of Klemm-Pandharipande \cite[Conjecture 0]{KP08}.

\begin{corollary}\label{cor:ClosedBPS}
For $\tbeta = \iota(\beta = \beta'+d[B])$, we have
$$
    n^{\tX}_{0, \tbeta}(\tgamma) \in \bZ.
$$
\end{corollary}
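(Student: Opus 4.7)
The plan is to deduce the corollary as an immediate consequence of the two main results already established earlier in the paper, namely the open/closed BPS correspondence (Theorem \ref{thm:BPSCorrespondence}) and the integrality of open BPS invariants in the disk case (the specialization of Theorem \ref{thm:OpenBPS} to $s=1$ and $\vmu=((d))$, as recorded in Section \ref{sect:Disk}).

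First, I would invoke Theorem \ref{thm:BPSCorrespondence} to transfer the question from the Calabi-Yau 4-fold $\tX$ to the toric open geometry $(X,L,f)$. Under the isomorphism $\iota: H_2(X,L;\bZ) \to H_2(\tX;\bZ)$ and for $\tbeta = \iota(\beta)$ with $\beta = \beta' + d[B]$, we have the identity
\[
n^{\tX}_{0,\tbeta}(\tgamma) \;=\; -\,n^{X,L,f}_{0,\beta,(d)}.
\]
Hence it suffices to show the right-hand side is an integer.

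Second, I would apply Theorem \ref{thm:OpenBPS} to the pair $(\beta,(d)) \in \Eff(X,L,f)$. The theorem yields $n^{X,L,f}_{g,\beta,(d)} \in \bZ$ for every $g \in \bZ_{\ge 0}$, and in particular for $g=0$. Combining with the correspondence gives $n^{\tX}_{0,\tbeta}(\tgamma) \in \bZ$, completing the proof.

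There is essentially no obstacle here, since the two ingredients are already in place: all the technical work is in proving Theorem \ref{thm:OpenBPS} (via the topological vertex and the integrality/finiteness lemmas of Section \ref{sect:OpenBPS}) and in establishing Theorem \ref{thm:BPSCorrespondence} (via the direct comparison of the LMOV formula \eqref{eqn:LMOVOpen} in the disk case with the Klemm-Pandharipande resummation \eqref{eqn:KPClosed} together with the Gromov-Witten correspondence of \cite{LY21}). The corollary is a one-line combination, so the proof proposal is simply to cite the two results in sequence, with attention only to checking that $\tbeta = \iota(\beta)$ matches the effective class appearing on both sides and that $k \mid \tbeta$ corresponds to $k \mid \beta, d$, which was already noted in Section \ref{sect:OpenClosedBPS}.
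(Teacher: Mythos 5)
Your proposal is correct and coincides with the paper's own argument: the corollary is obtained precisely by combining Theorem \ref{thm:BPSCorrespondence}, which gives $n^{\tX}_{0,\tbeta}(\tgamma) = -n^{X,L,f}_{0,\beta,(d)}$, with the $g=0$, $s=1$, $\vmu=((d))$ case of Theorem \ref{thm:OpenBPS}. No further checking is needed beyond the identification of effective classes under $\iota$, which you correctly note was already handled in Section \ref{sect:OpenClosedBPS}.
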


As discussed in Section \ref{sect:Intro}, \cite[Conjecture 0]{KP08} has been extensively studied in the literature. For compact Calabi-Yau 4-folds, it has been verified in examples in \cite{KP08,CMT18,Cao20,CMT22,COT22,COT24} and in general by \cite{IP18}. In the non-compact setting, a general proof is yet to be given, and to the best of the author's knowledge the known examples are the following:
\begin{itemize}
    \item Local curve $\Tot(\cL_1 \oplus \cL_2 \oplus \cL_3 \to C)$, where $\cL_1, \cL_2, \cL_3$ are line bundles on a smooth projective curve $C$ with $K_C \cong \cL_1 \otimes \cL_2 \otimes \cL_3$, in the cases
    
    \begin{itemize}
        \item[$\circ$] (\cite{CMT18}) $C$ has genus at least 1;
        
        \item[$\circ$] (\cite{CMT18}) $C = \bP^1$ and the $\cL_i$'s have low degree.  
    \end{itemize}

    \item Local surface $\Tot(\cL_1 \oplus \cL_2 \to S)$, where $\cL_1, \cL_2$ are line bundles on a smooth projective surface $S$ with $K_S \cong \cL_1 \otimes \cL_2$, in the cases
    
    \begin{itemize}
        \item[$\circ$] (\cite{KP08}) $S = \bP^2$, $\cL_1 = \cO_{\bP^2}(-1)$, $\cL_2 = \cO_{\bP^2}(-2)$;
        
        \item[$\circ$] (\cite{KP08,CMT22,CKM22}) $S$ is the Hirzebruch surface $\bP^1 \times \bP^1$, $F_1$, or $F_2$, and $\cL_1, \cL_2$ have certain low degrees;
        
        \item[$\circ$] (\cite{CMT18,CMT22}) $S$ is a toric del Pezzo surface, $\cL_1 = \cO_S$, $\cL_2 = K_S$;
        
        \item[$\circ$] (\cite{CMT18,CMT22}) $S$ is a rational elliptic surface, $\cL_1 = \cO_S$, $\cL_2 = K_S$, and the curve class is primitive;
        
        \item[$\circ$] (\cite{BBvG20,BS23,Schuler24}) $(S \mid D_1 + D_2)$ is a two-component Looijenga pair (see e.g. \cite[Section 2]{BBvG20}), $\cL_1 = \cO_S(-D_1)$, $\cL_2 = \cO_S(-D_2)$.\footnote{This covers certain toric cases above as well as some non-toric cases. We refer to \cite[Section 2]{BBvG20} and the references therein for a classification of the deformation types of Looijenga pairs.}
        
    \end{itemize}

    \item Local 3-fold $\Tot(K_W)$, where $W$ is a smooth projective 3-fold, in the cases
    
    \begin{itemize}
        \item[$\circ$] (\cite{KP08}) $W = \bP^3$;
        
        \item[$\circ$] (\cite{Cao20}) $W$ is a Fano hypersurface in $\bP^4$, and the curve class is the line class;
        
        \item[$\circ$] (\cite{Cao20}) $W = S \times \bP^1$ for a toric del Pezzo surface $S$.
    \end{itemize}

    \item (\cite{COT22,COT24}) $\Tot(T^*\bP^2)$, as an example of a holomorphic symplectic variety.

\end{itemize}

By Corollary \ref{cor:ClosedBPS}, the toric Calabi-Yau 4-folds $\tX$ arising from the open/closed correspondence can be added to the above list of non-compact examples. From the constructions, $\tX = \Tot(K_Y)$ where the (non-compact) toric 3-fold $Y$ may arise from an \emph{arbitrary} toric Calabi-Yau 3-fold $X$ and is not necessarily a local curve or local surface.

\begin{remark} \rm{
Following Remark \ref{rem:sp}, when $X$ is semi-projective and the resulting partial compactification $\tX^{\mathrm{sp}}$ is smooth, Corollary \ref{cor:ClosedBPS} can be directly generalized to $\tX^{\mathrm{sp}}$ with the non-equivariant insertion $[\tD][\tD_2]$, therefore providing yet another collection of examples. As observed in \cite[Section 0.5]{Schuler24}, if $(X, L, f)$ is the open geometry arising from a two-component Looijenga pair $(S \mid D_1 + D_2)$, as constructed in \cite{BBvG20,BS23,vGNS23,Schuler24}, then $\tX^{\mathrm{sp}}$ is deformation equivalent to the local surface $\Tot(\cO_S(-D_1) \oplus \cO_S(-D_2))$ mentioned in the list above.
}    
\end{remark}

\appendix

\section{Topological vertex formulas}\label{appdx:TopVertex}
In this section, we supply the details of the expression \eqref{eqn:TopVertex} by clarifying the signs in \cite{LLLZ09} from equation (7-9) to Proposition 7.4. We adopt the definitions and notations in \cite{LLLZ09} without repeating them here and we fix a choice of $\sqrt{-1}$ as before. Our formulas are based on the statements and equations up to (7-8). Note that (7-8) directly implies (7-10).

We first correct (7-9) as follows:
\begin{equation}\label{eqn:7-9}
\begin{aligned}
    & \sqrt{-1}^{\ell(\vmu)} G^\bullet_{\vmu}(\lambda; \fp(e_1), \fp(e_2), \fp(e_3))\\
    & = \sqrt{-1}^{\ell(\vmu)} \sum_{|\nu^i| = |\mu^i|} \tF^\bullet_{\vnu}(\lambda; 0) \prod_{i=1}^3 z_{\nu^i} \Phi^\bullet_{\nu^i, \mu^i} \left(\sqrt{-1}\frac{\fl_0(e_i)}{\fp(e_i)}\lambda \right)\\
    & = \sqrt{-1}^{\ell(\vmu)} \sum_{|\nu^i| = |\mu^i|} (-1)^{|\vnu|}\sqrt{-1}^{\ell(\vnu)}F^\bullet_{\vnu}(\lambda; 0) \prod_{i=1}^3 z_{\nu^i} \Phi^\bullet_{\nu^i, \mu^i} \left(\sqrt{-1}\frac{\fl_0(e_i)}{\fp(e_i)}\lambda \right) && \qquad (\text{by (6-6)})\\
    & = (-1)^{\sum_{i=1}^3 \vd(\bar{e}_i)}  \sum_{|\nu^i| = |\mu^i|} F^\bullet_{\vnu}(\lambda; 0) \prod_{i=1}^3 \sqrt{-1}^{\ell(\mu^i) + \ell(\nu^i)}z_{\nu^i} \Phi^\bullet_{\nu^i, \mu^i} \left(\sqrt{-1}\frac{\fl_0(e_i)}{\fp(e_i)}\lambda \right)\\
    & = (-1)^{\sum_{i=1}^3 \vd(\bar{e}_i)} \sum_{|\nu^i| = |\mu^i|} F^\bullet_{\vnu}(\lambda; 0) \prod_{i=1}^3 \sqrt{-1}^{(-\ell(\mu^i)-\ell(\nu^i))}z_{\nu^i} \Phi^\bullet_{\nu^i, \mu^i} \left(-\sqrt{-1}\frac{\fl_0(e_i)}{\fp(e_i)}\lambda \right).
\end{aligned}
\end{equation}

Now we deduce (7-12) from (7-10) using the correction \eqref{eqn:7-9} of (7-9) above:
\begin{equation}\label{eqn:7-12}
    \begin{aligned}
        F^{\bullet \Gamma}_{\vd, \vmu}(\lambda;u_1, u_2)
        = & \sum_{|\rho^{\bar{e}}| = d^{\bar{e}}} \prod_{\bar{e} \in E(\Gamma)} (-1)^{n^ed^{\bar{e}}}z_{\rho^{\bar{e}}} \prod_{v \in V_3(\Gamma)} \sqrt{-1}^{\ell(\vrho^v)}G^\bullet_{\vrho^v}(\lambda;\bw_v) \\
        & \cdot \prod_{v \in V_1(\Gamma), v_1(e) = v} (-1)^{d^{\bar{e}}}\sqrt{-1}^{\ell(\rho^{\bar{e}}) + \ell(\mu^v)}\Phi^\bullet_{\rho^{\bar{e}}, \mu^v} \left(\sqrt{-1}\frac{\ff(e)}{\fp(e)}\lambda\right)\\
        = & \sum_{|\rho^{\bar{e}}| = d^{\bar{e}}} \prod_{\bar{e} \in E(\Gamma)} (-1)^{n^ed^{\bar{e}}}z_{\rho^{\bar{e}}} \prod_{v \in V_3(\Gamma)} (-1)^{|\vrho^v|}  \sqrt{-1}^{(-\ell(\vrho^v) -\ell(\vnu^v))} \\
        &  \cdot \sum_{|\nu^{v,i}| = |\rho^{v,i}|}F^\bullet_{\vnu^v}(\lambda; 0) \prod_{i=1}^3 z_{\nu^{v,i}} \Phi^\bullet_{\nu^{v,i}, \rho^{v,i}} \left(-\sqrt{-1}\frac{\fl_0(e_i)}{\fp(e_i)}\lambda \right)\\
        & \cdot \prod_{v \in V_1(\Gamma), v_1(e) = v} (-1)^{d^{\bar{e}}}\sqrt{-1}^{\ell(\rho^{\bar{e}}) + \ell(\mu^v)}\Phi^\bullet_{\rho^{\bar{e}}, \mu^v} \left(\sqrt{-1}\frac{\ff(e)}{\fp(e)}\lambda\right) \qquad \qquad \qquad (\text{by \eqref{eqn:7-9}})\\
        = & \sum_{\vnu \in P_{\vd, \vmu}} \prod_{v \in V_3(\Gamma)} F^\bullet_{\vnu^v}(\lambda; 0) z_{\vnu^v} \prod_{\bar{e} \in E(\Gamma)} \cE_{\bar{e}}
    \end{aligned}
\end{equation}
where the term $ \prod_{\bar{e} \in E(\Gamma)} \cE_{\bar{e}}$ above is a product of contributions from edges $\bar{e} \in E(\Gamma)$. Here, for an edge $\bar{e}$ between two trivalent vertices $v = v_0(e), v' = v_1(e) \in V_3(\Gamma)$, the contribution is
\begin{align*}
    \cE_{\bar{e}} & = (-1)^{n^ed^{\bar{e}}}\sum_{\rho \vdash d^{\bar{e}}} \sqrt{-1}^{(-\ell(\nu^e) - \ell(\nu^{-e}) - 2\ell(\rho))}\Phi^\bullet_{\nu^e, \rho}\left(-\sqrt{-1}\frac{\fl_0(e)}{\fp(e)}\lambda\right) z_\rho \Phi^\bullet_{\nu^{-e}, \rho}\left(-\sqrt{-1}\frac{\fl_0(-e)}{\fp(-e)}\lambda\right)\\
    & = (-1)^{n^ed^{\bar{e}}}\sum_{\rho \vdash d^{\bar{e}}} \sqrt{-1}^{(-\ell(\nu^e) - \ell(\nu^{-e}) - 2\ell(\rho))}(-1)^{\ell(\nu^e) + \ell(\rho)}\Phi^\bullet_{\nu^e, \rho}\left(\sqrt{-1}\frac{\fl_0(e)}{\fp(e)}\lambda\right) z_\rho \Phi^\bullet_{\nu^{-e}, \rho}\left(\sqrt{-1}\frac{\fl_0(-e)}{\fp(e)}\lambda\right)\\
    & = (-1)^{n^ed^{\bar{e}}}\sum_{\rho \vdash d^{\bar{e}}} \sqrt{-1}^{\ell(\nu^e) - \ell(\nu^{-e})}\Phi^\bullet_{\nu^e, \rho}\left(\sqrt{-1}\frac{\fl_0(e)}{\fp(e)}\lambda\right) z_\rho \Phi^\bullet_{\nu^{-e}, \rho}\left(\sqrt{-1}\frac{\fl_0(-e)}{\fp(e)}\lambda\right)\\
    & = (-1)^{n^ed^{\bar{e}}} \sqrt{-1}^{\ell(\nu^e) - \ell(\nu^{-e})}\Phi^\bullet_{\nu^e, \nu^{-e}}\left(\sqrt{-1}n^e\lambda\right)
\end{align*}
where the last equality follows from (2-9) applied with
$$
    \fl_0(e) + \fl_0(-e) = \fl_0(e) - \fl_1(e) + \fp(e) = n^e\fp(e).
$$
For an edge $\bar{e}$ between a trivalent vertex $v' = v_0(e) \in V_3(\Gamma)$ and a univalent vertex $v = v_1(e) \in V_1(\Gamma)$, the contribution is
\begin{align*}
    \cE_{\bar{e}} & = (-1)^{n^ed^{\bar{e}}}\sum_{\rho \vdash d^{\bar{e}}} \sqrt{-1}^{(-\ell(\nu^e) + \ell(\mu^v))}\Phi^\bullet_{\nu^e, \rho}\left(-\sqrt{-1}\frac{\fl_0(e)}{\fp(e)}\lambda\right) z_\rho \Phi^\bullet_{\mu^v, \rho}\left(\sqrt{-1}\frac{\ff(e)}{\fp(e)}\lambda\right)\\
    & = (-1)^{n^ed^{\bar{e}}}\sum_{\rho \vdash d^{\bar{e}}} \sqrt{-1}^{(-\ell(\nu^e) + \ell(\mu^v))}(-1)^{\ell(\nu^e) + \ell(\mu^v)}\Phi^\bullet_{\nu^e, \rho}\left(\sqrt{-1}\frac{\fl_0(e)}{\fp(e)}\lambda\right) z_\rho \Phi^\bullet_{\mu^v, \rho}\left(-\sqrt{-1}\frac{\ff(e)}{\fp(e)}\lambda\right)\\
    & = (-1)^{n^ed^{\bar{e}}} \sqrt{-1}^{\ell(\nu^e) - \ell(\mu^v)}\Phi^\bullet_{\nu^e, \mu^v}\left(\sqrt{-1}n^e\lambda\right)\\
    & = (-1)^{n^ed^{\bar{e}}} \sqrt{-1}^{\ell(\nu^e) - \ell(\nu^{-e})}\Phi^\bullet_{\nu^e, \nu^{-e}}\left(\sqrt{-1}n^e\lambda\right)
\end{align*}
where the second-to-last equality follows from (2-9) applied with
$$
    \fl_0(e) - \ff(e) = n^e\fp(e)
$$
and the last equality follows from $\nu^{-e} = \mu^v$. Therefore, \eqref{eqn:7-12} is identical to (7-12).

Lastly, we use (2-8), (7-12), and (7-13) to derive the following correction of Proposition 7.4:
\begin{equation}\label{eqn:Prop7-4}
\begin{aligned}
    F^{\bullet\Gamma}_{\vd, \vmu} = &  \sum_{\vnu \in P_{\vd, \vmu}} \prod_{v \in V_3(\Gamma)} F^\bullet_{\vnu^v}(\lambda; 0) z_{\vnu^v} \prod_{\bar{e} \in E(\Gamma)} (-1)^{n^ed^{\bar{e}}} \sqrt{-1}^{\ell(\nu^e) - \ell(\nu^{-e})}\Phi^\bullet_{\nu^e, \nu^{-e}}\left(\sqrt{-1}n^e\lambda\right) \qquad (\text{by (7-12)})\\
    = & \sum_{\vnu \in P_{\vd, \vmu}} \prod_{v \in V_3(\Gamma)} \frac{(-1)^{|\vnu^v|}}{\sqrt{-1}^{\ell(\vnu^v)}}\sum_{|\xi^{v,i}| = |\nu^{v,i}|} \tC_{\vxi^v}(\lambda)\prod_{i=1}^3 \chi_{\xi^{v,i}}(\nu^{v,i})\\
     & \cdot \prod_{\bar{e} \in E(\Gamma)} (-1)^{n^ed^{\bar{e}}} \sqrt{-1}^{\ell(\nu^e) - \ell(\nu^{-e})} \sum_{\rho^{e} \vdash d^{\bar{e}}} e^{\sqrt{-1}n^e\kappa_{\rho^e}\lambda/2} \frac{\chi_{\rho^e}(\nu^e)}{z_{\nu^e}}\frac{\chi_{\rho^e}(\nu^{-e})}{z_{\nu^{-e}}} \quad (\text{by (2-8) and (7-13)})\\
     = & \sum_{\vnu \in P_{\vd, \vmu}} \prod_{v \in V_3(\Gamma)} \frac{(-1)^{|\vnu^v|}}{\sqrt{-1}^{\ell(\vnu^v)}}\sum_{|\xi^{v,i}| = |\nu^{v,i}|} \tC_{\vxi^v}(\lambda)\prod_{i=1}^3 \chi_{\xi^{v,i}}(\nu^{v,i})\\
     & \cdot \prod_{\bar{e} \in E(\Gamma)} (-1)^{(n^e+1)d^{\bar{e}}} \sqrt{-1}^{\ell(\nu^e) + \ell(\nu^{-e})} \sum_{\rho^{e} \vdash d^{\bar{e}}} e^{\sqrt{-1}n^e\kappa_{\rho^e}\lambda/2} \frac{\chi_{\rho^e}(\nu^e)}{z_{\nu^e}}\frac{\chi_{(\rho^e)^t}(\nu^{-e})}{z_{\nu^{-e}}} \\
     = & \sum_{\vxi, \vrho} \prod_{\bar{e} \in E(\Gamma)} (-1)^{(n^e+1)d^{\bar{e}}} e^{\sqrt{-1}n^e\kappa_{\rho^e}\lambda/2} \prod_{v \in V_3(\Gamma)} (-1)^{|\vxi^v|} \tC_{\vxi^v}(\lambda) \\
     & \cdot \prod_{v = v_0(e) \in V_3(\Gamma)} \sum_{\nu^e \vdash d^{\bar{e}}} \frac{\chi_{\xi^e}(\nu^e)\chi_{\rho^e}(\nu^e)}{z_{\nu^e}} \prod_{v = v_0(e) \in V_1(\Gamma)} \frac{\chi_{\rho^e}(\mu^v)}{z_{\mu^v}}\sqrt{-1}^{\ell(\mu^v)} \qquad (\text{where $\rho^{-e} := (\rho^e)^t$})\\
     = & \sum_{\vxi, \vrho} \prod_{\bar{e} \in E(\Gamma)} (-1)^{(n^e+1)d^{\bar{e}}} e^{\sqrt{-1}n^e\kappa_{\rho^e}\lambda/2} \prod_{v \in V_3(\Gamma)} (-1)^{|\vxi^v|} \tC_{\vxi^v}(\lambda) \\
     & \cdot \prod_{v = v_0(e) \in V_3(\Gamma)} \delta_{\xi^e, \rho^e} \prod_{v = v_0(e) \in V_1(\Gamma)} \frac{\chi_{\rho^e}(\mu^v)}{z_{\mu^v}}\sqrt{-1}^{\ell(\mu^v)} \qquad \qquad  (\text{by orthogonality of characters})\\
     = &  \sum_{\vrho \in T_{\vd, \vmu}} \prod_{\bar{e} \in E(\Gamma)} (-1)^{(n^e+1)d^{\bar{e}}} e^{\sqrt{-1}n^e\kappa_{\rho^e}\lambda/2} \prod_{v \in V_3(\Gamma)} \tC_{\vrho^v}(\lambda) \prod_{v = v_0(e) \in V_1(\Gamma)} \frac{\chi_{\rho^e}(\mu^v)}{z_{\mu^v}}\sqrt{-1}^{\ell(\mu^v)}(-1)^{|\mu^v|}.
\end{aligned}
\end{equation}
Equation \eqref{eqn:Prop7-4} differs from Proposition 7.4 in two ways:
\begin{itemize}
    \item For each $v \in V_1(\Gamma)$, there is an additional sign of $(-1)^{|\mu^v|+\ell(\mu^v)}$.
    
    \item For each $\bar{e} \in E(\Gamma)$, the sign in the power $e^{\sqrt{-1}n^e\kappa_{\rho^e}\lambda/2}$ is positive rather than negative. 
\end{itemize}
Moreover, \eqref{eqn:TopVertex} directly follows from \eqref{eqn:Prop7-4}.

\end{document}